\newcommand{\qbinom}{\genfrac{[}{]}{0pt}{}}
\newcommand{\C}{\mathbb{C}}
\newcommand{\N}{\mathbb{N}}
\newcommand{\E}{\mathrm{E}}
\newcommand{\e}{\mathrm{e}}
\newcommand{\h}{\mathrm{h}}
\newcommand{\s}{\mathrm{S}}
\newcommand{\M}{\widehat{\mathrm{M}}}
\newcommand{\I}{\widehat{\mathrm{I}}}
\newcommand{\D}{\mathbf{D}}
\newtheorem{theorem}{Theorem}
\newtheorem{definition}{Definition}
\newtheorem{proposition}{Proposition}
\newtheorem{example}{Example}
\begin{document}

\title{\textbf{Homogeneous Linear Calculus of Order 1 and a $\lambda$-Taylor Formula}}
\author{Ronald Orozco L\'opez}

\newcommand{\Addresses}{{
  \bigskip
  \footnotesize

  \textit{E-mail address}, R.~Orozco: \texttt{rj.orozco@uniandes.edu.co}
  
}}

\maketitle

\begin{abstract}
In this paper, a new calculus on sequences is defined. Also, the $\lambda$-derivative and the $\lambda$-integration are investigated. The fundamental theorem of $\lambda$-calculus is included. A suitable function basis for the $\lambda$-derivative and the $\lambda$-integral is provided, and various properties of this basis are given. A $\lambda$-Taylor formula for functions is given. 
\end{abstract}
\noindent 2020 {\it Mathematics Subject Classification}:
Primary 05A30. Secondary 33D45.

\noindent \emph{Keywords: } $\lambda$-calculus, $\lambda$-derivative, $\lambda$-integration, $\lambda$-Taylor theorem.

\section{Introduction}

Ward \cite{ward} introduced a calculus on a sequence $\{\psi_{n}\}_{n\in\N}$, with $\psi_{0}=0$ and $\psi_{n}\neq0$ for all $n\geq1$, in such a way that when $\psi_{n}=n$, we get the ordinary calculus, when $\psi_{n}=\frac{1-q^n}{1-q}$, is obtained the $q$-calculus \cite{jackson1,jackson2,jackson3}, and if $\psi_{n}=F_{n}$, where $\{F_{n}\}$ is the Fibonacci sequence, then we obtain the Golden calculus \cite{pashaev1,pashaev2}. A Ward-derivative $\D$ is a linear operator which satisfies  $\D(x^n)=\psi_{n}x^{n-1}$, for all $n\in\N$. The ring of constants associated with the derivative $\D$ is the set of complex numbers $\C$.

In this paper, a new calculus on sequences is introduced: the homogeneous linear calculus of order one or $\lambda$-calculus. This calculus is defined on the sequence $\psi_{n}=\lambda^n$, $n\geq0$, $\lambda\in\C$, which is the solution of the homogeneous linear difference equation of order 1
\begin{equation}\label{eqn_HLE}
    \psi_{n+1}-\lambda\psi_{n}=0.
\end{equation}
This new calculus is not Ward's calculus. We define the $\lambda$-derivative $\D_{\lambda}$ of the function $f(x)$ by $\D_{\lambda}f(x)=\frac{f(\lambda x)}{x}$. Then $\D_{\lambda}(x^{n})=\lambda^nx^{n-1}$, for all $n\geq0$, and thus $\D_{\lambda}(1)=\frac{1}{x}$ and $\D_{\lambda}(0)=0$. Therefore, the ring of constants associated with the derivative $\D_{\lambda}$ is the singleton set $\{0\}$. The $q$-derivative of Jackson $D_{q}f(x)=\frac{f(x)-f(qx)}{(1-q)x}$ is expressed as linear combination of $\lambda$-derivative, i.e., 
\begin{equation}
    D_{q}=\frac{1}{1-q}(\D_{1}-\D_{q}).
\end{equation}
Then, the $\lambda$-calculus is the simplest calculus, and we can use $\lambda$-derivatives to construct new derivatives. 

The paper is organized as follows. Section \ref{sec3} deals with the properties of the $\lambda$-derivative $\D_{\lambda}$. In this same section, we also study the properties of the $\lambda$-integral of the function $f(x)$ defined as $\I_{\lambda}f(x)=\frac{x}{\lambda}f(x/\lambda)$.

Section \ref{sec4} deals with the functions $\e_{1/\lambda}(a/x)$ and $\E_{1/\lambda}(a/x)$, where $\e_{q}(x)$ and $\E_{q}(x)$ are the $q$-exponential functions. In Section \ref{sec5} we introduce the functions
\begin{equation}
    (x-a)_{\lambda}^n=\left(1-\frac{a}{x}\right)\left(1-\frac{a}{\lambda x}\right)\cdots\left(1-\frac{a}{\lambda^{n-1}x}\right),
\end{equation}
for any positive integer $n\geq1$ y $(x-a)_{\lambda}^0=1$, which will serve as basis functions for our $\lambda$-Taylor theorem.. These functions are generalized for all $n\in\C$ in Section \ref{sec6}. In Section \ref{sec7} we provide a $\lambda$-Taylor theorem for functions $f(x^{-1})$ by using the basis functions $(x-a)_{\lambda}^n$ and the operator $\I_{\lambda}$, where $f(x)$ is a polynomial of degree $n$. Some connection formulas are given.

\section{Preliminaries from $q$-calculus} 

The $q$-shifted factorial be defined by
\begin{align*}
    (a;q)_{n}&=\begin{cases}
        1&\text{ if }n=0;\\
    \prod_{k=0}^{n-1}(1-q^{k}a),&\text{ if }n\neq0,\\
    \end{cases}\hspace{1cm}q\in\C,\\
    (a;q)_{\infty}&=\lim_{n\rightarrow\infty}(a;q)_{n}=\prod_{k=0}^{\infty}(1-aq^{k}),\hspace{1cm} \vert q\vert<1.
\end{align*}
The $q$-binomial coefficient is defined by
\begin{equation*}
\qbinom{n}{k}_{q}=\frac{(q;q)_{n}}{(q;q)_{k}(q;q)_{n-k}}.
\end{equation*}
The $q$-exponential $\e_{q}(z)$, \cite{euler,kac}, is defined by
\begin{equation*}
    \e_{q}(z)=\sum_{n=0}^{\infty}\frac{z^n}{(q;q)_{n}}=\frac{1}{(z;q)_{\infty}}.
\end{equation*}
Another $q$-analogue of the classical exponential function is
\begin{equation*}
    \E_{q}(z)=\sum_{n=0}^{\infty}q^{\binom{n}{2}}\frac{z^n}{(q;q)_{n}}=(-z;q)_{\infty}.
\end{equation*}
Some special $q$-polynomials: The $q$-binomial theorem
\begin{equation}\label{eqn_gauss}
    (x;q)_{n}=\sum_{k=0}^{n}\qbinom{n}{k}_{q}q^{\binom{k}{2}}(-1)^kx^k.
\end{equation}
The classical Rogers-Szeg\"o polynomials defined by
\begin{equation}\label{eqn_rs}
    \h_{n}(x|q)=\sum_{k=0}^{n}\qbinom{n}{k}_{q}x^k.
\end{equation}
The Stieljes-Wigert polynomials are defined by
\begin{equation}\label{eqn_sw}
    \s_{n}(x|q)=\frac{1}{(q;q)_{n}}\sum_{k=0}^{n}\qbinom{n}{k}_{q}q^{k^2}x^k.
\end{equation}

\section{Homogeneous linear calculus of order 1}\label{sec3}

\subsection{$\lambda$-differential operator}

\begin{definition}
We define the $\lambda$-derivative $\mathbf{D}_{\lambda}$ of the function $f(x)$ as
\begin{equation}
    (\mathbf{D}_{\lambda}f)(x)=\frac{1}{x}\M_{\lambda}\{f(x)\}=
    \begin{cases}
    \frac{f(\lambda x)}{x},&\text{ if }x\neq0;\\
    \lim_{x\rightarrow0}\frac{f(\lambda x)}{x},&\text{ if }x=0,
    \end{cases}
\end{equation}    
provided that the limit exists.
\end{definition}

It is straightforward to prove the following properties of the $\lambda$-differential operator $\mathbf{D}_{\lambda}$.
\begin{proposition}\label{prop_properties}
For all $\alpha,\beta,\gamma\in\C$,
\begin{enumerate}
    \item $\mathbf{D}_{\lambda}\{\alpha f+\beta g\}=\alpha\mathbf{D}_{\lambda}f+\beta\mathbf{D}_{\lambda}g$.
    \item $\mathbf{D}_{\lambda}\{\gamma\}=\frac{\gamma}{x}$.
    \item $\mathbf{D}_{\lambda}\{x^{n}\}=\lambda^{n}x^{n-1}$, for $n\in\N$.
\end{enumerate}
\end{proposition}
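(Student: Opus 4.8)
The proof follows directly from the definition, so the plan is to verify each of the three properties by unwinding $\mathbf{D}_{\lambda}f(x) = f(\lambda x)/x$ and using elementary algebra.

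For property (1), linearity, I would substitute $\alpha f + \beta g$ into the definition: $\mathbf{D}_{\lambda}\{\alpha f + \beta g\}(x) = \frac{(\alpha f + \beta g)(\lambda x)}{x} = \frac{\alpha f(\lambda x) + \beta g(\lambda x)}{x}$, then split the fraction to obtain $\alpha \frac{f(\lambda x)}{x} + \beta \frac{g(\lambda x)}{x} = \alpha \mathbf{D}_{\lambda}f + \beta \mathbf{D}_{\lambda}g$. The only subtlety is the point $x = 0$, where the value is defined as a limit; since limits respect linear combinations whenever the individual limits exist, the identity extends there too.

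For property (2), I would apply the definition to the constant function $f(x) = \gamma$. Since $f(\lambda x) = \gamma$ regardless of the argument, we get $\mathbf{D}_{\lambda}\{\gamma\}(x) = \frac{\gamma}{x}$ immediately. For property (3), I would take $f(x) = x^n$, so that $f(\lambda x) = (\lambda x)^n = \lambda^n x^n$, and therefore $\mathbf{D}_{\lambda}\{x^n\}(x) = \frac{\lambda^n x^n}{x} = \lambda^n x^{n-1}$.

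There is no real obstacle here, as the statement itself notes that the properties are straightforward; the computation amounts to a one-line substitution in each case. The only point requiring a word of care is the behavior at $x = 0$ in property (1), where one must observe that the defining limits combine linearly, and in properties (2) and (3) one should note that the stated formulas are the values for $x \neq 0$, with the $x = 0$ case handled by the limit clause of the definition when it exists (for instance, $\lambda^n x^{n-1}$ has a removable behavior at $0$ precisely when $n \geq 1$).
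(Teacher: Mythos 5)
Your proof is correct and matches the paper's (implicit) approach exactly: the paper offers no written proof, simply noting that the properties are straightforward consequences of the definition $\mathbf{D}_{\lambda}f(x)=f(\lambda x)/x$, which is precisely the substitution argument you carry out. Your added remark about the limit clause at $x=0$ is a reasonable extra precaution but not something the paper addresses.
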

\begin{proposition}[{\bf Product $\lambda$-rule}]\label{prop_prod_rule}
\begin{equation}
 \mathbf{D}_{\lambda}\{f(x)g(x)\}=f(\lambda x)\mathbf{D}_{\lambda}g(x)=\mathbf{D}_{\lambda}f(x)\cdot g(\lambda x)=x\mathbf{D}_{\lambda}f(x)\mathbf{D}_{\lambda}g(x).   
\end{equation}
\end{proposition}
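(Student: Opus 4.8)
The plan is to reduce every expression in the statement to the single quantity $\frac{f(\lambda x)\,g(\lambda x)}{x}$ by unwinding the definition $(\mathbf{D}_{\lambda}h)(x)=\frac{h(\lambda x)}{x}$. The central observation is that $\mathbf{D}_{\lambda}$ factors as a dilation $\M_{\lambda}$ (the substitution $x\mapsto\lambda x$) followed by division by $x$, and the dilation step is multiplicative on functions: it sends a product to the product of the dilated factors. Unlike the Leibniz rule of ordinary calculus, there is no additive correction term, so the claimed identity is a chain of equalities rather than a genuine computation balancing two sides.

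First I would compute the left-hand side directly. Applying the definition to the product $fg$ gives $(\mathbf{D}_{\lambda}\{fg\})(x)=\frac{(fg)(\lambda x)}{x}=\frac{f(\lambda x)\,g(\lambda x)}{x}$, where the last step uses that evaluation at $\lambda x$ is pointwise multiplicative. Next I would expand each of the three proposed right-hand sides using $\mathbf{D}_{\lambda}g=\frac{g(\lambda x)}{x}$ and $\mathbf{D}_{\lambda}f=\frac{f(\lambda x)}{x}$: the term $f(\lambda x)\mathbf{D}_{\lambda}g(x)$ becomes $f(\lambda x)\cdot\frac{g(\lambda x)}{x}$; the term $\mathbf{D}_{\lambda}f(x)\cdot g(\lambda x)$ becomes $\frac{f(\lambda x)}{x}\cdot g(\lambda x)$; and the term $x\,\mathbf{D}_{\lambda}f(x)\,\mathbf{D}_{\lambda}g(x)$ becomes $x\cdot\frac{f(\lambda x)}{x}\cdot\frac{g(\lambda x)}{x}$, where the extra factor of $x$ cancels one of the two denominators. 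All four quantities then collapse to $\frac{f(\lambda x)\,g(\lambda x)}{x}$, which closes the argument.

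The only point requiring care is the behaviour at $x=0$, where the definition is given by the limit $\lim_{x\to0}\frac{h(\lambda x)}{x}$; I would note that the chain of equalities holds for $x\neq0$ and then extends to $x=0$ by continuity whenever the relevant limits exist, which is precisely the proviso already built into the definition. There is no substantive obstacle here: the content of the proposition is exactly that $\mathbf{D}_{\lambda}$ inherits the multiplicativity of the dilation $\M_{\lambda}$, and the verification is a one-line substitution once the definition is inserted.
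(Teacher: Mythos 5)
Your proof is correct and matches the paper's (implicit) approach: the paper states this proposition without proof as a straightforward consequence of the definition, and your direct substitution showing all four expressions reduce to $\frac{f(\lambda x)g(\lambda x)}{x}$ is exactly the intended verification. The remark about extending to $x=0$ by the limit proviso is a sensible extra precaution but not a departure from the paper's reasoning.
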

From the above proposition, we have the following result.
\begin{proposition}
    \begin{equation}
        \mathbf{D}_{\lambda}\{f(x)g(x)\}=\frac{1}{2}(f(\lambda x)\mathbf{D}_{\lambda}g(x)+\mathbf{D}_{\lambda}f(x)\cdot g(\lambda x))
    \end{equation}
\end{proposition}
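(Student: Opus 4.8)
The plan is to obtain this identity directly from the Product $\lambda$-rule established in Proposition \ref{prop_prod_rule}, which exhibits $\mathbf{D}_{\lambda}\{f(x)g(x)\}$ as being simultaneously equal to each of the two expressions $f(\lambda x)\mathbf{D}_{\lambda}g(x)$ and $\mathbf{D}_{\lambda}f(x)\cdot g(\lambda x)$. The key observation is that, since both of these quantities are already equal to the single value $\mathbf{D}_{\lambda}\{f(x)g(x)\}$, any convex combination of them---in particular their arithmetic mean---must again equal that same value.

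Concretely, I would record the two equalities $\mathbf{D}_{\lambda}\{fg\}=f(\lambda x)\mathbf{D}_{\lambda}g$ and $\mathbf{D}_{\lambda}\{fg\}=\mathbf{D}_{\lambda}f\cdot g(\lambda x)$, add them side by side, and divide by $2$. On the left this produces $\frac{1}{2}(X+X)=X$ with $X=\mathbf{D}_{\lambda}\{fg\}$, while on the right it produces exactly $\frac{1}{2}\bigl(f(\lambda x)\mathbf{D}_{\lambda}g+\mathbf{D}_{\lambda}f\cdot g(\lambda x)\bigr)$, which is the asserted formula.

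As an independent sanity check I would unfold everything through the definition $\mathbf{D}_{\lambda}h(x)=h(\lambda x)/x$: each of the three expressions $\mathbf{D}_{\lambda}\{fg\}$, $f(\lambda x)\mathbf{D}_{\lambda}g$, and $\mathbf{D}_{\lambda}f\cdot g(\lambda x)$ collapses to the common value $f(\lambda x)g(\lambda x)/x$, so their mean is again $f(\lambda x)g(\lambda x)/x$. There is no genuine obstacle here; the statement is simply a symmetrized restatement of the product rule, and the only point requiring any care is the value at $x=0$, which should be handled through the limiting definition exactly as in Proposition \ref{prop_prod_rule}.
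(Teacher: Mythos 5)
Your argument is correct and matches the paper's own (unstated) reasoning: the paper simply remarks that this proposition follows from the Product $\lambda$-rule, and averaging the two equal expressions $f(\lambda x)\mathbf{D}_{\lambda}g$ and $\mathbf{D}_{\lambda}f\cdot g(\lambda x)$ is exactly that derivation. Your sanity check via the definition, showing all three expressions collapse to $f(\lambda x)g(\lambda x)/x$, is a fine additional confirmation.
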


\begin{proposition}[{\bf Quotient $\lambda$-rule}]
\begin{equation}
 \mathbf{D}_{\lambda}\left\{\frac{f(x)}{g(x)}\right\}=\frac{\mathbf{D}_{\lambda}f(x)}{g(\lambda x)}=\frac{\mathbf{D}_{\lambda}f(x)}{x\mathbf{D}_{\lambda}g(x)}, 
\end{equation}
$g(x)\neq0$, $g(\lambda x)\neq0$.
\end{proposition}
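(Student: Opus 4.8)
The plan is to reduce everything to the defining formula $(\mathbf{D}_{\lambda}f)(x)=f(\lambda x)/x$ and to the elementary observation that this same formula, read backwards, expresses the dilate $f(\lambda x)$ in terms of the $\lambda$-derivative. No product or quotient limit arguments are required, since the $\lambda$-derivative is a pointwise algebraic operation on $f$ rather than a genuine difference quotient; this is what makes the quotient rule here so much cleaner than its ordinary counterpart.

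First I would apply the definition directly to the quotient: for $x\neq0$,
\begin{equation*}
 \mathbf{D}_{\lambda}\left\{\frac{f(x)}{g(x)}\right\}=\frac{1}{x}\cdot\frac{f(\lambda x)}{g(\lambda x)},
\end{equation*}
which is legitimate precisely under the stated hypotheses $g(x)\neq0$ and $g(\lambda x)\neq0$, guaranteeing that both the quotient and its dilate are well defined.

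Next I would substitute the two identities $f(\lambda x)=x\,\mathbf{D}_{\lambda}f(x)$ and $g(\lambda x)=x\,\mathbf{D}_{\lambda}g(x)$, both immediate from the definition. Inserting the first of these in the numerator cancels the leading factor $1/x$ and yields the first asserted equality $\mathbf{D}_{\lambda}f(x)/g(\lambda x)$; inserting the second in the denominator then produces the second equality $\mathbf{D}_{\lambda}f(x)/(x\,\mathbf{D}_{\lambda}g(x))$.

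The only point requiring care is the bookkeeping of the nonvanishing conditions and the behaviour at $x=0$. For the final form I would note that the hypothesis $g(\lambda x)\neq0$ is exactly what keeps $\mathbf{D}_{\lambda}g(x)=g(\lambda x)/x$ nonzero for $x\neq0$, so no spurious division by zero is introduced. The boundary value $x=0$ is then handled exactly as in the definition, by passing to the limit of $f(\lambda x)/(x\,g(\lambda x))$ when it exists. I do not anticipate any substantive obstacle, since the entire statement is a formal rewriting of the defining formula; the main thing to get right is simply presenting the two substitutions in the correct order so that each equality in the displayed chain is transparent.
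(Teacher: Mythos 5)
Your proof is correct: the paper states this proposition without proof (treating it as an immediate consequence of the definition), and your direct computation from $(\mathbf{D}_{\lambda}h)(x)=h(\lambda x)/x$ together with the substitutions $f(\lambda x)=x\,\mathbf{D}_{\lambda}f(x)$ and $g(\lambda x)=x\,\mathbf{D}_{\lambda}g(x)$ is exactly the intended verification.
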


\begin{proposition}\label{prop_dnf}
For all $n\in\N$
    \begin{equation}
        \mathbf{D}_{\lambda}^{n}f(x)=\frac{f(\lambda^nx)}{\lambda^{\binom{n}{2}}x^n}.
    \end{equation}
\end{proposition}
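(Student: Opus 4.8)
The plan is to proceed by induction on $n$, using only the defining relation $(\mathbf{D}_{\lambda}f)(x)=f(\lambda x)/x$ from the Definition. The base case $n=0$ is immediate, since $\mathbf{D}_{\lambda}^{0}f(x)=f(x)$ while the right-hand side reads $f(\lambda^{0}x)/(\lambda^{\binom{0}{2}}x^{0})=f(x)$, using $\binom{0}{2}=0$. (One could equally well anchor the induction at $n=1$, where the asserted formula reduces to the definition itself, again because $\binom{1}{2}=0$.)

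For the inductive step, I would assume the identity holds for some $n$ and compute $\mathbf{D}_{\lambda}^{n+1}f=\mathbf{D}_{\lambda}(\mathbf{D}_{\lambda}^{n}f)$ by applying the operator to the function $g(x)=f(\lambda^{n}x)/(\lambda^{\binom{n}{2}}x^{n})$ supplied by the inductive hypothesis. The crucial step is to evaluate $g$ at the dilated argument $\lambda x$: the substitution $x\mapsto\lambda x$ sends $f(\lambda^{n}x)\mapsto f(\lambda^{n+1}x)$ and, more importantly, multiplies the denominator $x^{n}$ by an extra $\lambda^{n}$, giving $g(\lambda x)=f(\lambda^{n+1}x)/(\lambda^{\binom{n}{2}+n}x^{n})$. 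Dividing once more by $x$, as prescribed by the definition, then yields $\mathbf{D}_{\lambda}^{n+1}f(x)=f(\lambda^{n+1}x)/(\lambda^{\binom{n}{2}+n}x^{n+1})$.

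It remains only to check that the accumulated power of $\lambda$ agrees with the claim, that is, that $\binom{n}{2}+n=\binom{n+1}{2}$; this is the elementary identity $\tfrac{n(n-1)}{2}+n=\tfrac{n(n+1)}{2}$, which closes the induction. I do not expect any genuine obstacle here: the entire content of the proof is the bookkeeping of the factor $\lambda^{n}$ contributed by the denominator $x^{n}$ each time the dilation $x\mapsto\lambda x$ acts, together with the observation that these successive contributions telescope into the triangular number $\binom{n}{2}$. Accordingly, the whole argument can be presented compactly as a single displayed chain of equalities following the inductive hypothesis.
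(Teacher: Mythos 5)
Your proposal is correct and follows essentially the same route as the paper: induction on $n$, applying $\mathbf{D}_{\lambda}$ to the expression given by the inductive hypothesis, tracking the extra factor $\lambda^{n}$ produced by $(\lambda x)^{n}$, and closing with $\binom{n}{2}+n=\binom{n+1}{2}$. No gaps.
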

\begin{proof}
The proof is by induction on $n$. For $n=1$, use the definition $\mathbf{D}_{\lambda}$. Now suppose it is true for $n$. We will prove for $n+1$. We have
\begin{align*}
    \mathbf{D}_{\lambda}^{n+1}f(x)=\mathbf{D}_{\lambda}\mathbf{D}_{\lambda}^{n}f(x)=\mathbf{D}_{\lambda}\left\{\frac{f(\lambda^nx)}{\lambda^{\binom{n}{2}}x^{n}}\right\}=\frac{f(\lambda^{n}\lambda x)}{\lambda^{\binom{n}{2}}\lambda^nx^nx}=\frac{f(\lambda^{n+1}x)}{\lambda^{\binom{n+1}{2}}x^{n+1}}
\end{align*}
\end{proof}

\begin{example}
For all $\alpha\in\C$ and all $k\geq1$,
\begin{equation}
    \D_{\lambda}^kx^\alpha=\lambda^{k\alpha-\binom{k}{2}}x^{\alpha-k}.
\end{equation}
\end{example}

\begin{proposition}[{\bf Leibniz $\lambda$-rule}]
\begin{equation}
    \mathbf{D}_{\lambda}^{n}(fg)=\lambda^{\binom{n}{2}}x^{n}\mathbf{D}_{\lambda}^{n}(f)\mathbf{D}_{\lambda}^{n}(g).
\end{equation}    
\end{proposition}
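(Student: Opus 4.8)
The plan is to avoid any binomial-type expansion of the product and instead exploit the closed form already established in Proposition \ref{prop_dnf}. That proposition gives $\mathbf{D}_{\lambda}^{n}h(x)=\frac{h(\lambda^{n}x)}{\lambda^{\binom{n}{2}}x^{n}}$ for any function $h$, so both sides of the claimed identity can be rewritten purely in terms of evaluations at the single shifted point $\lambda^{n}x$, after which the statement collapses to an elementary comparison of prefactors.

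First I would compute the left-hand side by applying Proposition \ref{prop_dnf} with $h=fg$. The only substantive observation here is that evaluation at a point is multiplicative, i.e. $(fg)(\lambda^{n}x)=f(\lambda^{n}x)\,g(\lambda^{n}x)$, which yields
\[
\mathbf{D}_{\lambda}^{n}(fg)(x)=\frac{f(\lambda^{n}x)\,g(\lambda^{n}x)}{\lambda^{\binom{n}{2}}x^{n}}.
\]
Next I would expand the right-hand side by applying Proposition \ref{prop_dnf} separately to $f$ and to $g$, giving
\[
\lambda^{\binom{n}{2}}x^{n}\,\mathbf{D}_{\lambda}^{n}(f)\,\mathbf{D}_{\lambda}^{n}(g)=\lambda^{\binom{n}{2}}x^{n}\cdot\frac{f(\lambda^{n}x)}{\lambda^{\binom{n}{2}}x^{n}}\cdot\frac{g(\lambda^{n}x)}{\lambda^{\binom{n}{2}}x^{n}}.
\]
The last step is to simplify the exponent bookkeeping: the two separate applications contribute $\lambda^{2\binom{n}{2}}x^{2n}$ in the denominator, and the explicit prefactor $\lambda^{\binom{n}{2}}x^{n}$ cancels exactly one copy of $\lambda^{\binom{n}{2}}$ and one copy of $x^{n}$, leaving $\frac{f(\lambda^{n}x)\,g(\lambda^{n}x)}{\lambda^{\binom{n}{2}}x^{n}}$, which agrees with the left-hand side.

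I do not expect a genuine obstacle here; the identity is essentially forced once the closed form of $\mathbf{D}_{\lambda}^{n}$ is in hand, and the whole argument is a one-line substitution followed by cancellation of powers of $\lambda$ and $x$. The only points worth a word of care are that the computation is valid on the domain where $x\neq0$ (consistent with the definition of $\mathbf{D}_{\lambda}$), and that it uses nothing beyond the pointwise multiplicativity of function evaluation, so no hypothesis on $f$ or $g$ beyond being defined at $\lambda^{n}x$ is needed. An alternative route by induction on $n$ using the product $\lambda$-rule of Proposition \ref{prop_prod_rule} is available, but it is strictly longer and less transparent than the direct substitution, so I would present the closed-form argument.
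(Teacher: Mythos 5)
Your proposal is correct and follows essentially the same route as the paper: the paper also applies the closed form $\mathbf{D}_{\lambda}^{n}h(x)=h(\lambda^{n}x)/(\lambda^{\binom{n}{2}}x^{n})$ from Proposition \ref{prop_dnf} to $fg$, $f$, and $g$ separately and then matches the prefactors $\lambda^{\binom{n}{2}}x^{n}$. No gap to report.
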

\begin{proof}
From Proposition \ref{prop_properties} and \ref{prop_dnf}
\begin{align*}
    \mathbf{D}_{\lambda}^{n}(fg)&=\frac{f(\lambda^{n}x)g(\lambda^{n}x)}{\lambda^{\binom{n}{2}}x^n}\\
    &=\lambda^{\binom{n}{2}}x^n\frac{f(\lambda^nx)}{\lambda^{\binom{n}{2}}x^n}\frac{g(\lambda^nx)}{\lambda^{\binom{n}{2}}x^n}\\
    &=\lambda^{\binom{n}{2}}x^{n}\mathbf{D}_{\lambda}^n(f)\mathbf{D}_{\lambda}^{n}(g).
\end{align*}    
\end{proof}

\subsection{$\lambda$-integral operator}

\begin{definition}
Let $f:[0,x]\rightarrow\C$ be a continuous function. The $\lambda$-integral of the function $f(x)$ in $[0,x]$ is defined as
    \begin{equation}\label{eqn_int1}
        \I_{\lambda}\{f(x)\}=\int_{0}^{x} f(t)d_\lambda t=
        \M_{1/\lambda}\{xf(x)\}=\frac{x}{\lambda}f\left(\frac{x}{\lambda}\right)
    \end{equation}
provided that $f(x/\lambda)$ exists and define, also
\begin{equation}\label{eqn_int2}
    \int_{\alpha}^{\beta}f(x)d_{\lambda}x=\int_{0}^{\beta}f(x)d_{\lambda}x-\int_{0}^{\alpha}f(x)d_{\lambda}x.
\end{equation}
We will say that $f$ is $\lambda$-integrable in $[\alpha,\beta]$ provided that $f$ is defined in $x=\alpha/\lambda$ and $x=\beta/\lambda$.
\end{definition}
From Eqs. (\ref{eqn_int1}) and (\ref{eqn_int2}) it follows that
\begin{equation}\label{eqn_int_int}
    \int_{\alpha}^{\beta}f(x)d_{\lambda}x=\frac{\beta}{\lambda}f\left(\frac{\beta}{\lambda}\right)-\frac{\alpha}{\lambda}f\left(\frac{\alpha}{\lambda}\right).
\end{equation}

\begin{proposition}
For all $\alpha,\beta,\gamma\in\C$
\begin{enumerate}
    \item $\int_{\alpha}^{\beta}[f(x)+g(x)]d_{\lambda,a}x=\int_{\alpha}^{\beta}f(x)d_{\lambda,a}x+\int_{\alpha}^{\beta}f(x)d_{\lambda,a}x$.
    \item $\int_{\alpha}^{\alpha}f(x)d_{\lambda,a}x=0$.
    \item $\int_{\alpha}^{\beta}f(x)d_{\lambda,a}x=-\int_{\beta}^{\alpha}f(x)d_{\lambda,a}x$.
    \item $\int_{\alpha}^{\beta}f(x)d_{\lambda,a}x=\int_{\alpha}^{\gamma}f(x)d_{\lambda,a}x+\int_{\gamma}^{\beta}f(x)d_{\lambda,a}x$.
\end{enumerate}
\end{proposition}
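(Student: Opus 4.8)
The plan is to reduce all four identities to the explicit closed form of the definite $\lambda$-integral recorded in Eq.~(\ref{eqn_int_int}), namely
\begin{equation*}
\int_{\alpha}^{\beta}f(x)\,d_{\lambda}x=\frac{\beta}{\lambda}f\!\left(\frac{\beta}{\lambda}\right)-\frac{\alpha}{\lambda}f\!\left(\frac{\alpha}{\lambda}\right),
\end{equation*}
which itself follows from the definition $\I_{\lambda}\{f(x)\}=\frac{x}{\lambda}f(x/\lambda)$ together with Eq.~(\ref{eqn_int2}). Since this formula expresses the integral purely as a difference of two point-evaluations of $f$, each of the four claims becomes an elementary algebraic identity. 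Throughout, one assumes, as in the definition, that $f$ (and $g$) are defined at the relevant scaled endpoints $\alpha/\lambda$, $\beta/\lambda$, $\gamma/\lambda$ so that the formula applies.

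For item~1 I would substitute $f+g$ into the closed form and use that evaluation at a fixed point is additive, $(f+g)(\beta/\lambda)=f(\beta/\lambda)+g(\beta/\lambda)$, then regroup the four resulting terms into the two separate integrals. For item~2, setting $\beta=\alpha$ makes the two terms of the formula identical, so their difference vanishes. For item~3, exchanging the roles of $\alpha$ and $\beta$ in the formula multiplies the whole right-hand side by $-1$, which is exactly the claimed antisymmetry.

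The only slightly less mechanical step is item~4, the interval-additivity (Chasles) relation. Here I would write both $\int_{\alpha}^{\gamma}$ and $\int_{\gamma}^{\beta}$ via the closed form and observe that the two middle contributions $\frac{\gamma}{\lambda}f(\gamma/\lambda)$ telescope: one appears with a plus sign from the first integral and with a minus sign from the second, so they cancel, leaving precisely $\frac{\beta}{\lambda}f(\beta/\lambda)-\frac{\alpha}{\lambda}f(\alpha/\lambda)=\int_{\alpha}^{\beta}f(x)\,d_{\lambda}x$. I do not anticipate any genuine obstacle: once Eq.~(\ref{eqn_int_int}) is in hand the entire proposition is a sequence of cancellations, and the only care needed is to keep the scaled arguments and their signs straight.
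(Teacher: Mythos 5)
Your proposal is correct and is exactly the argument the paper intends: the paper states this proposition without proof, immediately after deriving Eq.~(\ref{eqn_int_int}), and all four items are the elementary consequences of that closed form that you describe (additivity of point evaluation, vanishing of the difference when $\beta=\alpha$, antisymmetry under swapping endpoints, and telescoping of the $\gamma$-terms). No gaps; your care about $f$ being defined at the scaled endpoints matches the paper's own integrability convention.
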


\begin{proposition}\label{prop_nth-int}
For all $n\in\N$,
    \begin{equation}
        \I_{\lambda}^{n}\{f(x)\}=\frac{x^n}{\lambda^{\binom{n+1}{2}}}f\left(\frac{x}{\lambda^n}\right).
    \end{equation}
\end{proposition}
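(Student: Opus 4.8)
The plan is to proceed by induction on $n$, exactly mirroring the argument used for Proposition \ref{prop_dnf}, since the $\lambda$-integral is defined by the single-step rule $\I_{\lambda}\{f(x)\}=\frac{x}{\lambda}f(x/\lambda)$ and the $n$-th iterate $\I_{\lambda}^{n}$ is just the $n$-fold composition. For the base case $n=1$, I would simply note that $\binom{2}{2}=1$, so the claimed formula reduces to $\I_{\lambda}\{f(x)\}=\frac{x}{\lambda}f(x/\lambda)$, which is precisely the definition in Eq. (\ref{eqn_int1}). (One may also verify $n=0$ trivially, where the empty iterate returns $f(x)$ and $\binom{1}{2}=0$.)

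For the inductive step, assume the formula holds for $n$, set $g(x):=\I_{\lambda}^{n}\{f(x)\}=\frac{x^{n}}{\lambda^{\binom{n+1}{2}}}f(x/\lambda^{n})$, and compute $\I_{\lambda}^{n+1}\{f(x)\}=\I_{\lambda}\{g(x)\}=\frac{x}{\lambda}g(x/\lambda)$. Substituting $x/\lambda$ into $g$ replaces $x^{n}$ by $x^{n}/\lambda^{n}$ and $f(x/\lambda^{n})$ by $f(x/\lambda^{n+1})$, so after multiplying by the prefactor $\frac{x}{\lambda}$ the numerator becomes $x^{n+1}$ and everything remaining is a matter of collecting the powers of $\lambda$ in the denominator.

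The only step requiring any care — the main obstacle, such as it is — is the exponent bookkeeping: the accumulated power of $\lambda$ in the denominator is $1+n+\binom{n+1}{2}$, and I must verify this equals $\binom{n+2}{2}$. This is the elementary identity $1+n+\tfrac{n(n+1)}{2}=\tfrac{(n+1)(n+2)}{2}$, which follows from a one-line computation or, equivalently, from the Pascal-type relation $\binom{n+1}{2}+(n+1)=\binom{n+2}{2}$. With this identity in hand, the denominator is $\lambda^{\binom{n+2}{2}}$ and the numerator is $x^{n+1}$, giving exactly the asserted formula with $n$ replaced by $n+1$ and thereby closing the induction.
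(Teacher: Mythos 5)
Your proof is correct and follows essentially the same route as the paper's: induction on $n$, with the base case given by the definition in Eq.~(\ref{eqn_int1}) and the inductive step obtained by applying $\I_{\lambda}$ once more and collecting powers of $\lambda$ via $\binom{n+1}{2}+(n+1)=\binom{n+2}{2}$. No issues.
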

\begin{proof}
The proof is by induction on $n$. For $n=1$, we use the Eq.(\ref{eqn_int1}). Now, suppose the statement is true for $n$; we will prove it for $n+1$. We have 
\begin{align*}
    \I_{\lambda}^{n+1}\{f(x)\}&=\I_{\lambda}\left\{\I_{\lambda}^{n}\{f(x)\}\right\}\\
    &=\I_{\lambda}\left\{\frac{x^{n}}{\lambda^{\binom{n+1}{2}}}f\left(\frac{x}{\lambda^n}\right)\right\}\\
    &=\M_{1/\lambda}\left\{\frac{x^{n+1}}{\lambda^{\binom{n+1}{2}}}f\left(\frac{x}{\lambda^n}\right)\right\}\\
    &=\frac{x^{n+1}}{\lambda^{\binom{n+2}{2}}}f\left(\frac{x}{\lambda^{n+1}}\right)
\end{align*}
\end{proof}

\begin{example}
For all $\alpha\in\C$ and all $k\geq1$,
    \begin{equation}
        \I_{\lambda}^{k}x^{\alpha}=\frac{x^{k+\alpha}}{\lambda^{k\alpha+\binom{k+1}{2}}}
    \end{equation}
\end{example}

\begin{definition}
For any real number $p\geq1$ we will denote by $\mathcal{L}_{\lambda}^{p}[\alpha,\beta]$ the set of functions $f:[\alpha,\beta]\rightarrow\C$ such that $\vert f\vert^p$ is $\lambda$-integrable in $[\alpha,\beta]$, i.e.,
\begin{equation}
    \mathcal{L}_{\lambda}^{p}[\alpha,\beta]=\bigg\{f:[\alpha,\beta]\rightarrow\C\ \Big\vert\int_{\alpha}^{\beta}\vert f\vert^pd_{\lambda}<\infty\bigg\}.
\end{equation}
We also set
\begin{equation}
    \mathcal{L}_{\lambda}^{\infty}[\alpha,\beta]=\bigg\{f:[\alpha,\beta]\rightarrow\C\ \Big\vert\ \sup\Big\{\Big\vert f\left(\frac{\alpha}{\lambda}\right)\Big\vert,\Big\vert f\left(\frac{\beta}{\lambda}\right)\Big\vert\Big\}<\infty\bigg\}.
\end{equation}
\end{definition}

\begin{theorem}[{\bf Fundamental theorems of $\lambda$-calculus}]\label{theo_funda}
\begin{enumerate}
    \item Let $f:[\alpha,\beta]\rightarrow\C$ be a function such that $f\in\mathcal{L}_{\lambda}^{1}[\alpha,x]$ for all $x\in[\alpha,\beta]$. Then
    \begin{equation}
        \mathbf{D}_{\lambda}\int_{\alpha}^{x}f(t)d_{\lambda}t=f(x)-\frac{\alpha}{\lambda}f\left(\frac{\alpha}{\lambda}\right)\frac{1}{x}.
    \end{equation}
    \item Let $f:[\alpha,\beta]\rightarrow\C$ be a function such that $\mathbf{D}_{\lambda}f\in\mathcal{L}_{\lambda}^{1}[\alpha,\beta]$. Then
    \begin{equation}
        \int_{\alpha}^{\beta}\mathbf{D}_{\lambda}f(x)d_{\lambda}x=f(\beta)-f(\alpha).
    \end{equation}
\end{enumerate}
\end{theorem}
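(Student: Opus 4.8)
The plan is purely computational: both assertions follow by substituting the closed forms already established for $\I_{\lambda}$ and $\D_{\lambda}$ and then simplifying the resulting dilations of the argument. No limiting or approximation argument is needed here, because the $\lambda$-integral is an explicit two-point expression (Eq.~(\ref{eqn_int_int})) rather than a genuine limit of Riemann-type sums; the role of the hypotheses $f\in\mathcal{L}_{\lambda}^{1}$ is only to guarantee that $f$ is actually defined at the finitely many shifted points $\alpha/\lambda$, $\beta/\lambda$, $x/\lambda$ where the formulas evaluate it, so that every expression written below makes sense.

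For part (1), I would first name the antiderivative $F(x)=\int_{\alpha}^{x}f(t)\,d_{\lambda}t$ and expand it via Eq.~(\ref{eqn_int_int}) as $F(x)=\frac{x}{\lambda}f(x/\lambda)-\frac{\alpha}{\lambda}f(\alpha/\lambda)$. Applying $\D_{\lambda}$ amounts to forming $F(\lambda x)/x$. The key step is computing $F(\lambda x)=\frac{\lambda x}{\lambda}f(\lambda x/\lambda)-\frac{\alpha}{\lambda}f(\alpha/\lambda)=xf(x)-\frac{\alpha}{\lambda}f(\alpha/\lambda)$, where the explicit $\lambda$ cancels and the composite argument $\lambda x/\lambda$ collapses to $x$ in the first term, while the boundary term is constant in $x$ and is therefore carried along unchanged. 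Dividing by $x$ then yields exactly $f(x)-\frac{\alpha}{\lambda}f(\alpha/\lambda)\frac{1}{x}$, as claimed.

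For part (2), I would set $g=\D_{\lambda}f$, so that $g(x)=f(\lambda x)/x$ by definition of the $\lambda$-derivative, and feed $g$ into Eq.~(\ref{eqn_int_int}). The only computation to perform is the evaluation of $g$ at the shifted endpoints: $g(\beta/\lambda)=f(\lambda\cdot\beta/\lambda)/(\beta/\lambda)=\frac{\lambda}{\beta}f(\beta)$, whence $\frac{\beta}{\lambda}g(\beta/\lambda)=f(\beta)$, and identically $\frac{\alpha}{\lambda}g(\alpha/\lambda)=f(\alpha)$. Subtracting the two boundary contributions gives $f(\beta)-f(\alpha)$.

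There is no genuine obstacle in the derivation itself; the single point demanding care is the bookkeeping of the dilations, namely tracking how the arguments $\lambda x$, $x/\lambda$, and the compositions $\lambda\cdot(\,\cdot/\lambda)$ interact, and confirming that the prefactors $x/\lambda$ and $1/x$ cancel against one another exactly as asserted. Before committing to the general argument I would verify both identities on the monomial basis $f(x)=x^{n}$, using Proposition~\ref{prop_properties} together with Eq.~(\ref{eqn_int_int}), since there both sides reduce to closed-form powers of $\alpha$, $\beta$, $x$, and $\lambda$ and the agreement can be read off directly; this serves as a reliable consistency check on the sign and scaling conventions.
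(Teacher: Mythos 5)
Your proposal is correct and follows essentially the same route as the paper: both parts reduce to substituting the explicit two-point formula Eq.~(\ref{eqn_int_int}) together with the definition of $\mathbf{D}_{\lambda}$ and then cancelling the dilations. The only cosmetic difference is in part (1), where the paper differentiates $\frac{x}{\lambda}f\left(\frac{x}{\lambda}\right)$ via the product $\lambda$-rule while you apply the definition $F(\lambda x)/x$ directly to the antiderivative; the computation and conclusion are identical.
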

\begin{proof}
From Eq.(\ref{eqn_int_int}) and Proposition \ref{prop_prod_rule},
\begin{align*}
    \mathbf{D}_{\lambda}\int_{\alpha}^{x}f(t)d_{\lambda}t&=\mathbf{D}_{\lambda}\left(\frac{x}{\lambda}f\left(\frac{x}{\lambda}\right)-\frac{\alpha}{\lambda}f\left(\frac{\alpha}{\lambda}\right)\right)\\
    &=x\mathbf{D}_{\lambda}\left(\frac{x}{\lambda}\right)\mathbf{D}_{\lambda}f\left(\frac{x}{\lambda}\right)-\frac{\alpha}{\lambda}f\left(\frac{\alpha}{\lambda}\right)\mathbf{D}_{\lambda}(1)\\
    &=x\frac{\lambda}{\lambda}\frac{f(x)}{x}-\frac{\alpha}{\lambda}f\left(\frac{\alpha}{\lambda}\right)\frac{1}{x}\\
    &=f(x)-\frac{\alpha}{\lambda}f\left(\frac{\alpha}{\lambda}\right)\frac{1}{x}.
\end{align*}
and
\begin{align*}
    \int_{\alpha}^{\beta}\mathbf{D}_{\lambda}f(x)d_{\lambda}x&=\frac{\beta}{\lambda}(\mathbf{D}_{\lambda}f)\left(\frac{\beta}{\lambda}\right)-\frac{\alpha}{\lambda}(\mathbf{D}_{\lambda}f)\left(\frac{\alpha}{\lambda}\right)\\
    &=\frac{\beta}{\lambda}\frac{f(\beta)}{\beta/\lambda}-\frac{\alpha}{\lambda}\frac{f(\alpha)}{\alpha/\lambda}\\
    &=f(\beta)-f(\alpha).
\end{align*}
\end{proof}


\section{Solution of some proportional equations}\label{sec4}

\begin{theorem}
Suppose that $\lambda>1$.The solution of the proportional equation 
\begin{equation}\label{eqn_func1}
    xf(\lambda x)=xf(x)+af(\lambda x),\ f(\infty)=1,
\end{equation}
is
\begin{equation}
    f(x)=(a/x;\lambda^{-1})_{\infty}=\E_{1/\lambda}(-a/x).
\end{equation}
\end{theorem}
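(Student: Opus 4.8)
The plan is to solve the functional equation directly by rewriting it as a self-similar recursion, unrolling that recursion into a telescoping product, and then passing to the limit to obtain an infinite product that I can match with the claimed $q$-shifted factorial.

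First I would collect the $f(\lambda x)$ terms in Eq.~(\ref{eqn_func1}): from $xf(\lambda x)-af(\lambda x)=xf(x)$ we get $(x-a)f(\lambda x)=xf(x)$, and hence
\[
    f(x)=\left(1-\frac{a}{x}\right)f(\lambda x).
\]
This recasts the proportional equation as a multiplicative relation between the value of $f$ at $x$ and its value at the dilated point $\lambda x$.

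Next I would iterate this identity, replacing $x$ in turn by $\lambda x,\lambda^2x,\dots,\lambda^{n-1}x$ and multiplying the resulting relations. The intermediate values cancel telescopically, leaving
\[
    f(x)=\prod_{k=0}^{n-1}\left(1-\frac{a}{\lambda^kx}\right)f(\lambda^nx).
\]
The decisive step is then to send $n\to\infty$, and this is where the hypothesis $\lambda>1$ does the real work: it ensures simultaneously that $\lambda^nx\to\infty$, so that $f(\lambda^nx)\to f(\infty)=1$ by the imposed normalization, and that $|\lambda^{-1}|<1$, so that the infinite product $\prod_{k\ge0}\bigl(1-a\lambda^{-k}/x\bigr)$ converges. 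I expect this limit passage to be the only genuinely analytic point of the proof, since it is what forces the boundary value to determine $f$ uniquely and what requires convergence of the product tail; everything before it is algebraic bookkeeping. Carrying out the limit gives
\[
    f(x)=\prod_{k=0}^{\infty}\left(1-\frac{a}{\lambda^kx}\right)=\left(\frac{a}{x};\lambda^{-1}\right)_{\infty},
\]
by the definition of the $q$-shifted factorial with base $q=\lambda^{-1}$.

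Finally, I would identify this product with the second $q$-exponential. Using the preliminary identity $\E_q(z)=(-z;q)_\infty$ with $q=\lambda^{-1}$ and $z=-a/x$ turns $\bigl(a/x;\lambda^{-1}\bigr)_\infty$ into $\E_{1/\lambda}(-a/x)$, which completes the argument and simultaneously yields both forms of the stated answer.
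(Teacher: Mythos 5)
Your proposal is correct and follows essentially the same route as the paper: rewrite the equation as $f(x)=\left(1-\frac{a}{x}\right)f(\lambda x)$, iterate to a telescoping finite product, and let $n\to\infty$ using $f(\infty)=1$ to obtain $(a/x;\lambda^{-1})_{\infty}=\E_{1/\lambda}(-a/x)$. Your remarks on why $\lambda>1$ guarantees both $f(\lambda^n x)\to f(\infty)$ and convergence of the infinite product are a welcome addition that the paper leaves implicit.
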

\begin{proof}
From the functional equation Eq.(\ref{eqn_func1})
\begin{equation}\label{eqn_sol_tay}
    f(x)=\left(1-\frac{a}{x}\right)f(\lambda x).
\end{equation}
Then, by iterating Eq.(\ref{eqn_sol_tay})
\begin{equation}
    f(x)=f(\lambda^{n}x)\prod_{k=0}^{n-1}\left(1-\frac{a}{\lambda^{k}x}\right).
\end{equation}
Then, by taking the limit as $n\rightarrow\infty$,
\begin{align*}
    f(x)&=\lim_{n\rightarrow\infty}f(\lambda^nx)\prod_{k=0}^{n-1}\left(1-\frac{a}{\lambda^kx}\right)\\
    &=f(\infty)\prod_{k=0}^{\infty}\left(1-\frac{a}{\lambda^kx}\right)\\
    &=\prod_{k=0}^{\infty}\left(1-\frac{a}{\lambda^kx}\right)=(a/x;\lambda^{-1}).
\end{align*}
\end{proof}

\begin{theorem}
Suppose that $\lambda>1$.The solution of the proportional equation 
\begin{equation}\label{eqn_func2}
    af(x)=xf(x)-xf(\lambda x),\ f(\infty)=1,
\end{equation}
is
\begin{equation}
    f(x)=\frac{1}{(a/x;\lambda^{-1})_{\infty}}=\e_{1/\lambda}(a/x).
\end{equation}
\end{theorem}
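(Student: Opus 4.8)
The plan is to mirror the proof of the previous theorem almost verbatim, since the two proportional equations are structurally dual. First I would solve the functional equation Eq.(\ref{eqn_func2}) for $f(\lambda x)$ in terms of $f(x)$: collecting the two terms on the right gives $xf(\lambda x)=(x-a)f(x)$, and hence
\begin{equation*}
    f(\lambda x)=\left(1-\frac{a}{x}\right)f(x),\qquad\text{equivalently}\qquad f(x)=\frac{f(\lambda x)}{1-\dfrac{a}{x}}.
\end{equation*}
This is the multiplicative recurrence that drives everything; note it is the reciprocal form of Eq.(\ref{eqn_sol_tay}), which is exactly why the answer will invert the product obtained in the previous theorem.

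Next I would iterate the recurrence. Replacing $x$ by $\lambda x,\lambda^2x,\dots$ and composing $n$ times yields
\begin{equation*}
    f(x)=\frac{f(\lambda^{n}x)}{\displaystyle\prod_{k=0}^{n-1}\left(1-\frac{a}{\lambda^{k}x}\right)}.
\end{equation*}
I would then pass to the limit $n\rightarrow\infty$. Because $\lambda>1$, we have $\lambda^{n}x\rightarrow\infty$, so the boundary condition $f(\infty)=1$ lets me replace the numerator by $1$, giving
\begin{equation*}
    f(x)=\frac{1}{\displaystyle\prod_{k=0}^{\infty}\left(1-\frac{a}{\lambda^{k}x}\right)}=\frac{1}{(a/x;\lambda^{-1})_{\infty}}.
\end{equation*}
The final identification $\dfrac{1}{(a/x;\lambda^{-1})_{\infty}}=\e_{1/\lambda}(a/x)$ is then immediate from the definition $\e_{q}(z)=1/(z;q)_{\infty}$ recalled in the preliminaries, taking $q=\lambda^{-1}$ and $z=a/x$.

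The only genuine subtlety—the main obstacle, such as it is—is the convergence of the infinite product and the legitimacy of exchanging the limit with the product/boundary value. Here the hypothesis $\lambda>1$ is exactly what is needed: it forces the base $q=\lambda^{-1}$ to satisfy $|q|<1$, which is precisely the regime in which $(a/x;\lambda^{-1})_{\infty}$ is defined and convergent, so the telescoping limit is justified and the denominator is a well-defined nonzero quantity for $x$ outside the product's zero set. Everything else is routine algebra identical in spirit to the preceding theorem.
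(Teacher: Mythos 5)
Your proof is correct and is exactly the intended argument: the paper actually omits the proof of this theorem, evidently because it is the verbatim dual of the preceding one, and your derivation (rewrite the equation as $f(x)=f(\lambda x)/(1-a/x)$, iterate, and use $\lambda>1$ with $f(\infty)=1$ to pass to the infinite product in the denominator) mirrors the paper's proof of Eq.~(\ref{eqn_func1}) step for step. Your remark that $\lambda>1$ puts $q=\lambda^{-1}$ in the regime $|q|<1$ where $(a/x;\lambda^{-1})_{\infty}$ converges is a welcome addition that the paper itself glosses over.
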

Eqs. (\ref{eqn_func1}) and (\ref{eqn_func2}) can be writing as
\begin{align*}
    a\D_{\lambda}f(x)&=f(\lambda x)-f(x),\\
    \D_{\lambda}f(x)&=\frac{1}{x}\left(1-\frac{a}{x}\right)f(x),
\end{align*}
respectively.

\begin{proposition}
For all $n\in\N$ and $\vert\lambda\vert<1$,
    \begin{align*}
        \D_{\lambda}^{n}(ax;\lambda)_{\infty}&=\frac{(ax;\lambda)_{\infty}}{\lambda^{\binom{n}{2}}x^n(ax;\lambda)_{n}}.\\
        \D_{\lambda}^{n}\frac{1}{(ax;\lambda)_{\infty}}&=\frac{(ax;\lambda)_{n}}{\lambda^{\binom{n}{2}}x^n(ax;\lambda)_{\infty}}.
    \end{align*}
\end{proposition}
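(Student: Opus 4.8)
The plan is to reduce both identities to Proposition \ref{prop_dnf}, which already supplies a closed form for the $n$-th $\lambda$-derivative of an arbitrary function. Setting $f(x)=(ax;\lambda)_{\infty}$, Proposition \ref{prop_dnf} gives $\D_{\lambda}^{n}f(x)=\frac{f(\lambda^{n}x)}{\lambda^{\binom{n}{2}}x^{n}}$, so the whole problem collapses to evaluating $f(\lambda^{n}x)=(a\lambda^{n}x;\lambda)_{\infty}$ in terms of $(ax;\lambda)_{\infty}$ and $(ax;\lambda)_{n}$.

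First I would establish the product-splitting identity
\[
(a\lambda^{n}x;\lambda)_{\infty}=\frac{(ax;\lambda)_{\infty}}{(ax;\lambda)_{n}}.
\]
This follows by writing $(a\lambda^{n}x;\lambda)_{\infty}=\prod_{k=0}^{\infty}(1-ax\lambda^{n+k})$ and re-indexing with $j=n+k$, which shows it equals the tail $\prod_{j=n}^{\infty}(1-ax\lambda^{j})$; dividing the full product $(ax;\lambda)_{\infty}=\prod_{j=0}^{\infty}(1-ax\lambda^{j})$ by its first $n$ factors $(ax;\lambda)_{n}=\prod_{j=0}^{n-1}(1-ax\lambda^{j})$ yields exactly this tail. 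The hypothesis $\vert\lambda\vert<1$ guarantees that the infinite products converge absolutely, so the re-indexing and the quotient are legitimate rather than merely formal.

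Substituting this splitting into the Proposition \ref{prop_dnf} formula immediately produces the first identity. For the second, I would take $g(x)=1/(ax;\lambda)_{\infty}$, apply Proposition \ref{prop_dnf} once more, and use the reciprocal of the same splitting identity, namely $g(\lambda^{n}x)=(ax;\lambda)_{n}/(ax;\lambda)_{\infty}$, to obtain the claimed expression.

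There is no serious obstacle here: the argument is a one-line consequence of Proposition \ref{prop_dnf} together with an elementary re-indexing of the $q$-shifted factorial. The only point deserving a word of care is the convergence of the infinite products, which is precisely what $\vert\lambda\vert<1$ secures; this is what permits factoring off the tail of $(ax;\lambda)_{\infty}$ as a genuine quotient.
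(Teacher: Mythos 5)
Your proof is correct: the paper states this proposition without any proof, and your route---applying Proposition \ref{prop_dnf} to get $\D_{\lambda}^{n}f(x)=f(\lambda^{n}x)/(\lambda^{\binom{n}{2}}x^{n})$ and then using the tail-splitting identity $(a\lambda^{n}x;\lambda)_{\infty}=(ax;\lambda)_{\infty}/(ax;\lambda)_{n}$---is plainly the argument the author intended, with the hypothesis $\vert\lambda\vert<1$ serving exactly the convergence role you identify. No gap to report.
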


\begin{proposition}
For all $n\in\N$ and $\vert\lambda\vert<1$,
    \begin{align*}
        \I_{\lambda}^{n}(ax;\lambda)_{\infty}&=\frac{x^n}{\lambda^{\binom{n+1}{2}}}(\lambda^{-1}ax;\lambda^{-1})_{n}(ax;\lambda)_{\infty}.\\
        \I_{\lambda}^{n}\frac{1}{(ax;\lambda)_{\infty}}&=\frac{x^n}{\lambda^{\binom{n+1}{2}}(\lambda^{-1}ax;\lambda^{-1})_{n}(ax;\lambda)_{\infty}}.
    \end{align*}
\end{proposition}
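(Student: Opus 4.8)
The plan is to reduce everything to the closed form for the iterated $\lambda$-integral already established in Proposition \ref{prop_nth-int}, namely $\I_{\lambda}^{n}\{f(x)\} = \frac{x^n}{\lambda^{\binom{n+1}{2}}} f(x/\lambda^n)$, and then to evaluate the two specific choices $f(x) = (ax;\lambda)_{\infty}$ and $f(x) = 1/(ax;\lambda)_{\infty}$. Thus no separate induction is needed; all the work lies in simplifying the shifted argument $x/\lambda^n$ inside the $q$-shifted factorial. (This mirrors the companion proposition for $\D_{\lambda}^n$, where Proposition \ref{prop_dnf} produces the argument $\lambda^n x$ and the analogous splitting yields $(ax;\lambda)_n$ in base $\lambda$.)

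First I would substitute to obtain $f(x/\lambda^n) = (ax\lambda^{-n};\lambda)_{\infty} = \prod_{k=0}^{\infty}(1 - ax\lambda^{k-n})$. The heart of the argument is the factorization of this infinite product obtained by re-indexing $j = k-n$:
\[
(ax\lambda^{-n};\lambda)_{\infty} = \prod_{j=-n}^{\infty}(1 - ax\lambda^{j}) = \left(\prod_{m=1}^{n}(1 - ax\lambda^{-m})\right)(ax;\lambda)_{\infty},
\]
where the tail $\prod_{j\geq 0}$ reproduces $(ax;\lambda)_{\infty}$ and the $n$ extra factors carrying negative exponents form a finite prefactor. The second step is to recognize this prefactor as a $\lambda^{-1}$-shifted factorial: writing $\lambda^{-m} = \lambda^{-1}(\lambda^{-1})^{m-1}$ gives $\prod_{m=1}^{n}(1 - ax\lambda^{-m}) = \prod_{k=0}^{n-1}(1 - \lambda^{-1}ax\,(\lambda^{-1})^{k}) = (\lambda^{-1}ax;\lambda^{-1})_{n}$.

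Combining these yields $\I_{\lambda}^{n}(ax;\lambda)_{\infty} = \frac{x^n}{\lambda^{\binom{n+1}{2}}}(\lambda^{-1}ax;\lambda^{-1})_{n}(ax;\lambda)_{\infty}$, the first identity. The second follows at once by reciprocation: since $\I_{\lambda}^{n}$ applied to $1/(ax;\lambda)_{\infty}$ contributes the factor $1/f(x/\lambda^n)$, inverting the same factorization produces the quotient form. Throughout, the hypothesis $|\lambda| < 1$ is exactly what guarantees convergence of the infinite products $(ax;\lambda)_{\infty}$ and $(ax\lambda^{-n};\lambda)_{\infty}$.

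The main obstacle is purely the bookkeeping in the index shift. One must verify that splitting off exactly $n$ factors from $(ax\lambda^{-n};\lambda)_{\infty}$ leaves precisely $(ax;\lambda)_{\infty}$, and that those $n$ factors assemble into $(\lambda^{-1}ax;\lambda^{-1})_{n}$ in base $\lambda^{-1}$ rather than an off-by-one variant or the wrong base. Pinning down the direction of the re-indexing and the base of the resulting finite factorial is the only place where a slip is likely; the power $\lambda^{-\binom{n+1}{2}}$ and the factor $x^n$ are carried over directly from Proposition \ref{prop_nth-int} and require no manipulation.
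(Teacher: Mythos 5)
Your proof is correct, and it follows exactly the route the paper evidently intends (the proposition is stated without proof, but it is the direct analogue of the $\D_{\lambda}^{n}$ result via Proposition \ref{prop_nth-int}): substitute $x/\lambda^{n}$ into $(ax;\lambda)_{\infty}$, split off the $n$ factors with negative exponents, and identify them as $(\lambda^{-1}ax;\lambda^{-1})_{n}$. The index bookkeeping checks out, and the second identity follows by reciprocation as you say.
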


\section{$\lambda$-derivative and $\lambda$-integral of $(x-a)_{\lambda}^n$}\label{sec5}

\begin{definition}
The $\lambda$-analogue of $(x-a)^n$ is the function
    \begin{equation}
        (x-a)_{\lambda}^{n}=
        \begin{cases}
            1,&\text{ if }n=0;\\
            \prod_{k=0}^{n-1}\left(1-\frac{a}{\lambda^kx}\right),&\text{ if }n\neq1.
        \end{cases}
    \end{equation}
\end{definition}

\begin{proposition}
For all $n,k\in\N$,
    \begin{equation}
        \mathbf{D}_{\lambda}^k(x-a)_{\lambda}^n=\frac{1}{\lambda^{\binom{k}{2}}x^k}\frac{(x-a)_{\lambda}^{n+k}}{(x-a)_{\lambda}^k}.
    \end{equation}
\end{proposition}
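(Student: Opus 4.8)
The plan is to reduce the whole statement to the closed form for the iterated $\lambda$-derivative established in Proposition~\ref{prop_dnf}, namely $\mathbf{D}_{\lambda}^{k}f(x)=f(\lambda^{k}x)/\big(\lambda^{\binom{k}{2}}x^{k}\big)$, and then to identify the shifted function $f(\lambda^{k}x)$ with the quotient appearing on the right-hand side. Accordingly, the first step is to set $f(x)=(x-a)_{\lambda}^{n}=\prod_{j=0}^{n-1}\big(1-\tfrac{a}{\lambda^{j}x}\big)$ and apply Proposition~\ref{prop_dnf} directly. This instantly produces the prefactor $1/\big(\lambda^{\binom{k}{2}}x^{k}\big)$ and reduces the problem to evaluating $f$ at the scaled argument $\lambda^{k}x$.

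The second step is the evaluation of $f(\lambda^{k}x)$. Replacing $x$ by $\lambda^{k}x$ in the product sends each factor $1-\tfrac{a}{\lambda^{j}x}$ to $1-\tfrac{a}{\lambda^{j+k}x}$, so that after reindexing by $i=j+k$ one obtains
\[
f(\lambda^{k}x)=\prod_{j=0}^{n-1}\left(1-\frac{a}{\lambda^{j+k}x}\right)=\prod_{i=k}^{n+k-1}\left(1-\frac{a}{\lambda^{i}x}\right).
\]
The third step is to recognize this last product as the desired telescoping quotient. Since $(x-a)_{\lambda}^{n+k}=\prod_{i=0}^{n+k-1}\big(1-\tfrac{a}{\lambda^{i}x}\big)$ and $(x-a)_{\lambda}^{k}=\prod_{i=0}^{k-1}\big(1-\tfrac{a}{\lambda^{i}x}\big)$, the factors indexed $i=0,\dots,k-1$ cancel in the ratio, leaving precisely the product over $i=k,\dots,n+k-1$ found above. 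Hence $f(\lambda^{k}x)=(x-a)_{\lambda}^{n+k}/(x-a)_{\lambda}^{k}$, and combining this with the prefactor from Proposition~\ref{prop_dnf} yields the claim.

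I expect the only point requiring attention to be the bookkeeping of the index shift and the matching of the product ranges; there is no genuine obstacle here, as the identity is essentially a restatement of Proposition~\ref{prop_dnf} once the self-similar structure $f(\lambda^{k}x)=(x-a)_{\lambda}^{n+k}/(x-a)_{\lambda}^{k}$ of the basis functions is observed. It is worth verifying the boundary cases to confirm the formula: for $k=0$ both sides collapse to $(x-a)_{\lambda}^{n}$ since $\binom{0}{2}=0$ and $(x-a)_{\lambda}^{0}=1$, and for $n=0$ the left side is $\mathbf{D}_{\lambda}^{k}(1)=1/\big(\lambda^{\binom{k}{2}}x^{k}\big)$ while the right side reduces to the same prefactor times $(x-a)_{\lambda}^{k}/(x-a)_{\lambda}^{k}=1$, so both agree.
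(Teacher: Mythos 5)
Your proof is correct and follows essentially the same route as the paper: apply Proposition~\ref{prop_dnf} to get the prefactor $1/\bigl(\lambda^{\binom{k}{2}}x^{k}\bigr)$ together with the shifted product $\prod_{i=0}^{n-1}\bigl(1-\tfrac{a}{\lambda^{i+k}x}\bigr)$, then reindex and telescope to obtain $(x-a)_{\lambda}^{n+k}/(x-a)_{\lambda}^{k}$. The verification of the boundary cases $k=0$ and $n=0$ is a small addition not present in the paper, but the argument is otherwise identical.
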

\begin{proof}
    \begin{align*}
        \mathbf{D}_{\lambda}^{k}(x-a)_{\lambda}^n&=\frac{1}{\lambda^{\binom{k}{2}}x^k}(\lambda^k x-a)_{\lambda}^n\\
        &=\frac{1}{\lambda^{\binom{k}{2}}x^k}\prod_{i=0}^{n-1}\left(1-\frac{a}{\lambda^{i+k}x}\right)\\
        &=\frac{1}{\lambda^{\binom{k}{2}}x^k}\prod_{i=k}^{n+k-1}\left(1-\frac{a}{\lambda^{i}x}\right)\\
        &=\frac{1}{\lambda^{\binom{k}{2}}x^k}\frac{(x-a)_{\lambda}^{n+k}}{(x-a)_{\lambda}^k}.
    \end{align*}
\end{proof}

\begin{proposition}
    \begin{equation}
        \I_{\lambda}^{k}(x-a)_{\lambda}^n=
        \begin{cases}
            \frac{x^k}{\lambda^{\binom{k+1}{2}}},&\text{ if }n=0;\\
            \frac{x^k}{\lambda^{\binom{k+1}{2}}}(\lambda^{-k}x-a)_{\lambda}^{k}(x-a)_{\lambda}^{n-k},&\text{ if }n\neq1,\ k\leq n.
        \end{cases}
    \end{equation}
\end{proposition}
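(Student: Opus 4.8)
The plan is to reduce the whole statement to a single application of Proposition \ref{prop_nth-int} followed by careful bookkeeping of a shifted product. Writing $f(x)=(x-a)_{\lambda}^{n}$, Proposition \ref{prop_nth-int} gives immediately
\begin{equation*}
    \I_{\lambda}^{k}(x-a)_{\lambda}^{n}=\frac{x^{k}}{\lambda^{\binom{k+1}{2}}}\,f\!\left(\frac{x}{\lambda^{k}}\right),
\end{equation*}
so that the entire content of the proposition is the identification of $f(x/\lambda^{k})$ with the claimed product of two factors. For $n=0$ this is trivial, since $f\equiv 1$, and we obtain the first case directly; all the work lies in the case $n\geq 1$, $k\leq n$.

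For the second case, the first step is to expand $f(x/\lambda^{k})$ using the definition of $(x-a)_{\lambda}^{n}$:
\begin{equation*}
    f\!\left(\frac{x}{\lambda^{k}}\right)=\prod_{i=0}^{n-1}\left(1-\frac{a}{\lambda^{i}(x/\lambda^{k})}\right)=\prod_{i=0}^{n-1}\left(1-\frac{a}{\lambda^{i-k}x}\right).
\end{equation*}
Reindexing by $j=i-k$ turns this into $\prod_{j=-k}^{\,n-1-k}\bigl(1-a/(\lambda^{j}x)\bigr)$, a product of $n$ consecutive factors whose indices run from $-k$ up to $n-1-k$. This is exactly the same shift-and-split mechanism used in the preceding proof for $\D_{\lambda}^{k}(x-a)_{\lambda}^{n}$, but now run with a negative shift rather than a positive one.

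The key step is then to split this product at $j=0$, which is legitimate precisely when $k\leq n$ (so that $-k\leq -1$ and $0\leq n-1-k$, with the extreme cases $k=0$ and $k=n$ handled by the empty-product convention). The negative block $\prod_{j=-k}^{-1}$ has exactly $k$ factors and, after the substitution $y=\lambda^{-k}x$ in $(y-a)_{\lambda}^{k}$, equals $(\lambda^{-k}x-a)_{\lambda}^{k}$ in the notation already introduced for the higher $\lambda$-derivative; the nonnegative block $\prod_{j=0}^{\,n-1-k}$ has $n-k$ factors and equals $(x-a)_{\lambda}^{n-k}$. Multiplying these two blocks and restoring the prefactor $x^{k}/\lambda^{\binom{k+1}{2}}$ yields the asserted formula.

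I do not anticipate a genuine obstacle here: the only delicate point is the index arithmetic in the split, and confirming that the hypothesis $k\leq n$ is exactly what guarantees that both blocks are well-defined and that the boundary cases collapse correctly onto the empty-product conventions. Everything else is routine substitution justified by Proposition \ref{prop_nth-int}.
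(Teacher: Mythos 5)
Your proposal is correct and follows essentially the same route as the paper's proof: apply Proposition \ref{prop_nth-int} to reduce everything to $(x/\lambda^{k}-a)_{\lambda}^{n}$, expand it as a product of $n$ factors, and split that product into a block of $k$ factors giving $(\lambda^{-k}x-a)_{\lambda}^{k}$ and a block of $n-k$ factors giving $(x-a)_{\lambda}^{n-k}$. The reindexing $j=i-k$ is only a cosmetic relabeling of the paper's split at $i=k$.
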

\begin{proof}
From Proposition \ref{prop_nth-int},
\begin{equation*}
   \I_{\lambda}^{k}\{1\}=\frac{x^k}{\lambda^{\binom{k+1}{2}}} 
\end{equation*}
and
\begin{align*}
    \I_{\lambda}^{k}(x-a)_{\lambda}^n&=\frac{x^k}{\lambda^{\binom{k+1}{2}}}(x/\lambda^k-a)_{\lambda}^n\\
    &=\frac{x^k}{\lambda^{\binom{k+1}{2}}}\prod_{i=0}^{n-1}\left(1-\frac{a}{\lambda^{i-k}x}\right)\\
    &=\frac{x^k}{\lambda^{\binom{k+1}{2}}}\prod_{i=0}^{k-1}\left(1-\frac{a}{\lambda^{i-k}x}\right)\prod_{i=k}^{n-1}\left(1-\frac{a}{\lambda^{i-k}x}\right)\\
    &=\frac{x^k}{\lambda^{\binom{k+1}{2}}}(\lambda^{-k}x-a)_{\lambda}^{k}\prod_{i=0}^{n-k-1}\left(1-\frac{a}{\lambda^{i}x}\right)\\
    &=\frac{x^k}{\lambda^{\binom{k+1}{2}}}(\lambda^{-k}x-a)_{\lambda}^{k}(x-a)_{\lambda}^{n-k}.
\end{align*}
\end{proof}

\begin{proposition}
Let $m$ and $n$ be two non-negative integers. Then the following assertion is valid
    \begin{equation}
        (x-a)_{\lambda}^{n+m}=(x-a)_{\lambda}^{m}(\lambda^mx-a)_{\lambda}^n.
    \end{equation}
\end{proposition}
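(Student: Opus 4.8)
The plan is to establish the identity directly from the product definition of $(x-a)_\lambda^n$ given in Section \ref{sec5}, by splitting the defining product of the left-hand side at the index $k=m$ and reindexing the tail. This is the same bookkeeping device used in the previous two propositions of this section, where a product from $0$ to $n-1$ was cut into an initial block and a shifted remainder.

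First I would unfold the definition, assuming for the moment $n,m\ge 1$,
\begin{equation*}
    (x-a)_{\lambda}^{n+m}=\prod_{k=0}^{n+m-1}\left(1-\frac{a}{\lambda^k x}\right),
\end{equation*}
and break the product range into the two blocks $0\le k\le m-1$ and $m\le k\le n+m-1$. The first block is literally $(x-a)_\lambda^m$. For the second block I would perform the shift $i=k-m$, turning it into
\begin{equation*}
    \prod_{i=0}^{n-1}\left(1-\frac{a}{\lambda^{i+m}x}\right)=\prod_{i=0}^{n-1}\left(1-\frac{a}{\lambda^{i}(\lambda^m x)}\right).
\end{equation*}
By the very definition of $(y-a)_\lambda^n$ applied with $y=\lambda^m x$, this last product equals $(\lambda^m x-a)_\lambda^n$. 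Multiplying the two blocks then yields the asserted factorization.

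The only points requiring separate attention are the degenerate cases $m=0$ and $n=0$, in which one of the two products is empty and equals $1$ by the $n=0$ convention: when $m=0$ the formula reduces to $(x-a)_\lambda^n=(x-a)_\lambda^n$ since $\lambda^0 x=x$, and when $n=0$ it reduces to $(x-a)_\lambda^m=(x-a)_\lambda^m$. There is no genuine obstacle in this argument; it is purely a splitting-and-shifting identity for a finite product, exactly parallel to the classical $q$-shifted factorial law $(a;q)_{n+m}=(a;q)_m\,(aq^m;q)_n$ recalled in the preliminaries, and it does not even require the induction used elsewhere in the paper.
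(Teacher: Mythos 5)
Your proof is correct: the paper states this proposition without proof, treating it as immediate from the definition, and your split-and-reindex argument for the defining product is exactly the natural justification (and matches the bookkeeping the paper does use inside the proofs of the neighboring propositions). The handling of the empty-product cases $m=0$ and $n=0$ is also correct.
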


\begin{definition}
Let $n$ be a non-negative integer. Then we set the following definition
    \begin{equation}
        (x-a)_{\lambda}^{-n}=\frac{1}{(\lambda^{-n}x-a)_{\lambda}^{n}}.
    \end{equation}
\end{definition}

\begin{proposition}
For any integer $n$,
\begin{equation}
    \D_{\lambda}^k(x-a)_{\lambda}^{n}=\frac{(x-a)_{\lambda}^{n+k}}{\lambda^{\binom{k}{2}}x^k(x-a)_{\lambda}^{k}}.
\end{equation}
\end{proposition}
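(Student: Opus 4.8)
The plan is to reduce the entire statement to Proposition~\ref{prop_dnf} together with the additivity identity that has already been established for non-negative exponents. Applying Proposition~\ref{prop_dnf} to the function $g(x)=(x-a)_{\lambda}^{n}$ — which is a well-defined function of $x$ for \emph{every} integer $n$, once one reads the defining product (for $n\geq 0$) or the reciprocal relation $(x-a)_{\lambda}^{-m}=1/(\lambda^{-m}x-a)_{\lambda}^{m}$ (for $n=-m<0$) as the relevant rational expression — gives at once
\[
\D_{\lambda}^{k}(x-a)_{\lambda}^{n}=\frac{(\lambda^{k}x-a)_{\lambda}^{n}}{\lambda^{\binom{k}{2}}x^{k}},
\]
where $(\lambda^{k}x-a)_{\lambda}^{n}$ denotes the same expression evaluated at $\lambda^{k}x$. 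This already disposes of the powers of $\lambda$ and of $x$, so the whole proposition is equivalent to the single identity
\[
(\lambda^{k}x-a)_{\lambda}^{n}=\frac{(x-a)_{\lambda}^{n+k}}{(x-a)_{\lambda}^{k}},\qquad k\geq 0,\ n\in\Z,
\]
which I will call $(\star)$. I would isolate $(\star)$ as the real content of the proof.

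For $n\geq 0$, identity $(\star)$ is merely the additivity proposition for non-negative exponents, rewritten: taking $m=k$ there gives $(x-a)_{\lambda}^{n+k}=(x-a)_{\lambda}^{k}(\lambda^{k}x-a)_{\lambda}^{n}$, and dividing by $(x-a)_{\lambda}^{k}$ yields $(\star)$. So for non-negative $n$ there is nothing new and the earlier proposition is recovered verbatim.

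The genuinely new case is $n=-m$ with $m>0$, and here I expect the bookkeeping of the $\lambda$-shifts to be the main obstacle. I would split on the sign of $n+k=k-m$. If $k\geq m$, apply the non-negative additivity to the \emph{exponent} $k=m+(k-m)$, namely $(x-a)_{\lambda}^{k}=(x-a)_{\lambda}^{k-m}(\lambda^{k-m}x-a)_{\lambda}^{m}$, so that $\frac{(x-a)_{\lambda}^{k-m}}{(x-a)_{\lambda}^{k}}=\frac{1}{(\lambda^{k-m}x-a)_{\lambda}^{m}}$; on the other hand the definition of the negative power, evaluated at $\lambda^{k}x$, gives $(\lambda^{k}x-a)_{\lambda}^{-m}=1/(\lambda^{-m}\lambda^{k}x-a)_{\lambda}^{m}=1/(\lambda^{k-m}x-a)_{\lambda}^{m}$, and the two sides of $(\star)$ agree. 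If instead $k<m$, then $n+k=-(m-k)<0$, and I would expand the factor $(\lambda^{k-m}x-a)_{\lambda}^{m}=(\lambda^{k-m}x-a)_{\lambda}^{m-k}(x-a)_{\lambda}^{k}$ by the non-negative additivity (splitting $m=(m-k)+k$ and using that the shift $\lambda^{m-k}\cdot\lambda^{k-m}x=x$ collapses); combined with the defining relation for $(x-a)_{\lambda}^{-(m-k)}$ this again matches both sides of $(\star)$. In every case the only tools used are the already-proven additivity for non-negative exponents and the definition $(x-a)_{\lambda}^{-n}=1/(\lambda^{-n}x-a)_{\lambda}^{n}$, so no convergence or analytic hypothesis on $\lambda$ is required.

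I would close by assembling the three cases into the single identity $(\star)$ and substituting back into the expression from Proposition~\ref{prop_dnf}. As a sanity check — and as an alternative that makes $(\star)$ transparent — when $\vert\lambda\vert>1$ one may write $(x-a)_{\lambda}^{n}=(a/x;\lambda^{-1})_{\infty}/(a\lambda^{-n}/x;\lambda^{-1})_{\infty}$ for every integer $n$; then both sides of $(\star)$ telescope to $(a\lambda^{-k}/x;\lambda^{-1})_{\infty}/(a\lambda^{-n-k}/x;\lambda^{-1})_{\infty}$, confirming the identity before committing to the purely algebraic case analysis.
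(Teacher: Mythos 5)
Your proof is correct and takes essentially the same route as the paper's: apply Proposition~\ref{prop_dnf} to reduce everything to the shift identity $(\lambda^k x-a)_{\lambda}^{n}\,(x-a)_{\lambda}^{k}=(x-a)_{\lambda}^{n+k}$, and then verify that identity from the non-negative additivity proposition together with the definition $(x-a)_{\lambda}^{-m}=1/(\lambda^{-m}x-a)_{\lambda}^{m}$. Your explicit split on the sign of $n+k$ is in fact slightly more careful than the paper's version, which writes the chain of equalities only for $n<0$ and leaves the subcase $k>|n|$ implicit.
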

\begin{proof}
If $n=-n^\prime<0$, we have
    \begin{align*}
        \D_{\lambda}^k(x-a)_{\lambda}^{n}&=\D_{\lambda}^k\bigg\{\frac{1}{(\lambda^{-n^\prime}x-a)_{\lambda}^{n^\prime}}\bigg\}\\
        &=\frac{1}{\lambda^{\binom{k}{2}}x^k(\lambda^{k-n^\prime}x-a)_{\lambda}^{n^\prime}}\\
        &=\frac{1}{\lambda^{\binom{k}{2}}x^k(\lambda^{k-n^\prime}x-a)_{\lambda}^{n^\prime-k}(x-a)_{\lambda}^k}\\
        &=\frac{(x-a)_{\lambda}^{n+k}}{\lambda^{\binom{k}{2}}x^k(x-a)_{\lambda}^{k}}.
    \end{align*}
\end{proof}

\begin{proposition}\label{prop_intk_binom}
For any integer $n$ and for all $k\geq1$,
    \begin{equation}
        \I_{\lambda}^k(x-a)_{\lambda}^n=\frac{x^k(\lambda^{-k}x-a)_{\lambda}^{k}(x-a)_{\lambda}^{n-k}}{\lambda^{\binom{k+1}{2}}}.
    \end{equation}
\end{proposition}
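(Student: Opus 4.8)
The plan is to reduce the whole statement to a single application of Proposition \ref{prop_nth-int} followed by a factorization of $\lambda$-binomials. Since Proposition \ref{prop_nth-int} is proved for an arbitrary function, it applies verbatim to $f(x)=(x-a)_{\lambda}^{n}$ for \emph{any} integer $n$, the symbol $(x-a)_{\lambda}^{n}$ being well defined for negative $n$ through the convention $(x-a)_{\lambda}^{-m}=1/(\lambda^{-m}x-a)_{\lambda}^{m}$. This yields immediately
\[
    \I_{\lambda}^{k}(x-a)_{\lambda}^{n}=\frac{x^{k}}{\lambda^{\binom{k+1}{2}}}\left(\frac{x}{\lambda^{k}}-a\right)_{\lambda}^{n}=\frac{x^{k}}{\lambda^{\binom{k+1}{2}}}(\lambda^{-k}x-a)_{\lambda}^{n},
\]
so the entire claim collapses to proving the factorization $(\lambda^{-k}x-a)_{\lambda}^{n}=(\lambda^{-k}x-a)_{\lambda}^{k}\,(x-a)_{\lambda}^{n-k}$ for every integer $n$ and every $k\geq1$.

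Next I would recognize this factorization as the special case $y=\lambda^{-k}x$, $q=k$, $p=n-k$ of the index-addition law $(y-a)_{\lambda}^{p+q}=(y-a)_{\lambda}^{q}(\lambda^{q}y-a)_{\lambda}^{p}$; the substitution works because $\lambda^{q}y=\lambda^{k}\lambda^{-k}x=x$, which is exactly what produces the factor $(x-a)_{\lambda}^{n-k}$ on the right. When $n\geq k$ both exponents $p=n-k$ and $q=k$ are non-negative and the identity is precisely the multiplicative property already established (equivalently, the case $k\leq n$ of the non-negative integral proposition proved above). Hence the only genuinely new content lies in the regime $n<k$, where $p=n-k<0$ and the law must be pushed beyond non-negative exponents.

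The main obstacle, therefore, is extending the index-addition law to a negative upper exponent, and I would carry this out directly from the definitions. Writing $n-k=-(k-n)$ with $k-n>0$ and unfolding $(x-a)_{\lambda}^{n-k}=1/(\lambda^{n-k}x-a)_{\lambda}^{k-n}$, the right-hand side of the factorization becomes
\[
    \frac{(\lambda^{-k}x-a)_{\lambda}^{k}}{(\lambda^{n-k}x-a)_{\lambda}^{k-n}},
\]
and after rewriting each $\lambda$-binomial as a product $\prod_{i}(1-a/(\lambda^{i}x))$ one checks that the index ranges telescope: the shifted factors making up the numerator either cancel against, or complete, the block coming from the denominator, leaving exactly the product that defines $(\lambda^{-k}x-a)_{\lambda}^{n}$ (again via the negative-power convention when $n<0$). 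The bookkeeping splits naturally into $0\le n<k$ and $n<0$, but both are the same cancellation of overlapping factor-blocks. A cleaner and essentially equivalent route is to adopt the reverse-product convention $\prod_{i=0}^{n-1}(\cdots)=\big(\prod_{i=n}^{-1}(\cdots)\big)^{-1}$ for $n<0$, verify once that it agrees with the definition $(x-a)_{\lambda}^{-m}=1/(\lambda^{-m}x-a)_{\lambda}^{m}$, and then invoke multiplicativity under concatenation of the index interval, $[0,m-1]\cup[m,m+n-1]=[0,m+n-1]$; this makes the index-addition law hold uniformly for all integers and finishes the proof with no separate cases. The only delicate point is that initial compatibility check of the two conventions.
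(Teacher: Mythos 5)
Your proposal is correct and follows essentially the same route as the paper: both first apply Proposition \ref{prop_nth-int} to produce $\frac{x^k}{\lambda^{\binom{k+1}{2}}}(\lambda^{-k}x-a)_{\lambda}^{n}$ and then establish the factorization $(\lambda^{-k}x-a)_{\lambda}^{n}=(\lambda^{-k}x-a)_{\lambda}^{k}(x-a)_{\lambda}^{n-k}$ by inserting the missing block of factors and telescoping, using the negative-power convention $(x-a)_{\lambda}^{-m}=1/(\lambda^{-m}x-a)_{\lambda}^{m}$. The only difference is organizational (you split on the sign of $n-k$ after a uniform application of Proposition \ref{prop_nth-int}, while the paper splits on the sign of $n$ and unfolds the convention before integrating), and your telescoping checks do go through.
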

\begin{proof}
If $n=-n^\prime<0$ and $k\leq n$, we have
    \begin{align*}
        \I_{\lambda}^k(x-a)_{\lambda}^{n}&=\I_{\lambda}^k\bigg\{\frac{1}{(\lambda^{-n^\prime}x-a)_{\lambda}^{n^\prime}}\bigg\}\\
        &=\frac{x^k}{\lambda^{\binom{k+1}{2}}(\lambda^{-n^\prime-k}x-a)_{\lambda}^{n^\prime}}\\
        &=\frac{x^k(\lambda^{-k}x-a)_{\lambda}^k}{\lambda^{\binom{k+1}{2}}(\lambda^{-n^\prime-k}x-a)_{\lambda}^{n^\prime}(\lambda^{-k}x-a)_{\lambda}^k}\\
        &=\frac{x^k(\lambda^{-k}x-a)_{\lambda}^{k}}{\lambda^{\binom{k+1}{2}}(\lambda^{-n^\prime-k}x-a)_{\lambda}^{n^\prime+k}}\\
        &=\frac{x^k(\lambda^{-k}x-a)_{\lambda}^{k}(x-a)_{\lambda}^{n-k}}{\lambda^{\binom{k+1}{2}}}.
    \end{align*}
If $k>n$, from Theorem \ref{prop_nth-int}
\begin{align*}
    \I_{\lambda}^k(x-a)_{\lambda}^n&=\frac{x^k}{\lambda^{\binom{k+1}{2}}}(\lambda^{-k}x-a)_{\lambda}^n\\
    &=\frac{x^{k}(\lambda^{-k}x-a)_{\lambda}^n(\lambda^{-k+n}x-a)_{\lambda}^{-n+k}}{\lambda^{\binom{k+1}{2}}(\lambda^{-k+n}x-a)_{\lambda}^{-n+k}}\\
    &=\frac{x^{k}(\lambda^{-k}x-a)_{\lambda}^k}{\lambda^{\binom{k+1}{2}}(\lambda^{-k+n}x-a)_{\lambda}^{-n+k}}\\
    &=\frac{x^{k}(\lambda^{-k}x-a)_{\lambda}^k(x-a)_{\lambda}^{n-k}}{\lambda^{\binom{k+1}{2}}}.
\end{align*}
\end{proof}

\begin{proposition}
    \begin{equation}
        \D_{\lambda}^k\frac{1}{(x-a)_{\lambda}^n}=\frac{(x-a)_{\lambda}^k}{\lambda^{\binom{k}{2}}x^k(x-a)_{\lambda}^{n+k}}.
    \end{equation}
\end{proposition}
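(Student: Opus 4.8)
The plan is to obtain the result by a single application of the $n$-th iterate formula for $\D_{\lambda}$ together with the factorization property of $(x-a)_{\lambda}^n$. First I would set $f(x)=1/(x-a)_{\lambda}^n$ and invoke Proposition \ref{prop_dnf}, which gives
\begin{equation*}
\D_{\lambda}^{k}\frac{1}{(x-a)_{\lambda}^{n}}=\frac{f(\lambda^{k}x)}{\lambda^{\binom{k}{2}}x^{k}}=\frac{1}{\lambda^{\binom{k}{2}}x^{k}\,(\lambda^{k}x-a)_{\lambda}^{n}},
\end{equation*}
where $(\lambda^{k}x-a)_{\lambda}^{n}=\prod_{i=0}^{n-1}\bigl(1-a/(\lambda^{i+k}x)\bigr)$ is simply what results from replacing $x$ by $\lambda^{k}x$ in the product defining $(x-a)_{\lambda}^{n}$.

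The second step is to rewrite the shifted factor $(\lambda^{k}x-a)_{\lambda}^{n}$ in terms of the ordinary factors. Using the factorization identity $(x-a)_{\lambda}^{n+m}=(x-a)_{\lambda}^{m}(\lambda^{m}x-a)_{\lambda}^{n}$ with $m=k$, one has $(\lambda^{k}x-a)_{\lambda}^{n}=(x-a)_{\lambda}^{n+k}/(x-a)_{\lambda}^{k}$; equivalently, this follows directly by reindexing the product above as $\prod_{i=k}^{n+k-1}\bigl(1-a/(\lambda^{i}x)\bigr)$, which telescopes against $(x-a)_{\lambda}^{k}$ inside $(x-a)_{\lambda}^{n+k}$. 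Substituting this into the display yields
\begin{equation*}
\D_{\lambda}^{k}\frac{1}{(x-a)_{\lambda}^{n}}=\frac{(x-a)_{\lambda}^{k}}{\lambda^{\binom{k}{2}}x^{k}(x-a)_{\lambda}^{n+k}},
\end{equation*}
which is precisely the claimed formula.

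Since the argument is essentially a routine substitution followed by a telescoping of the product, I do not anticipate a genuine obstacle. The only point that requires a little care is the bookkeeping of the shifted argument $\lambda^{k}x$ inside $(x-a)_{\lambda}^{n}$ and applying the factorization identity in the correct order, so that the index ranges match up. If one wants the statement for negative $n$ as well, to be consistent with the preceding propositions, the same computation goes through verbatim once $(x-a)_{\lambda}^{n}$ is interpreted via the extension $(x-a)_{\lambda}^{-n}=1/(\lambda^{-n}x-a)_{\lambda}^{n}$, because both Proposition \ref{prop_dnf} and the factorization identity remain valid for all integer exponents.
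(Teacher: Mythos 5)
Your proof is correct and follows essentially the same route as the paper: both reduce $\D_{\lambda}^{k}$ of the reciprocal to $\frac{1}{\lambda^{\binom{k}{2}}x^{k}(\lambda^{k}x-a)_{\lambda}^{n}}$ and then telescope via the identity $(x-a)_{\lambda}^{n+k}=(x-a)_{\lambda}^{k}(\lambda^{k}x-a)_{\lambda}^{n}$. The only difference is cosmetic: the paper detours through the negative-exponent notation $(\lambda^{n}x-a)_{\lambda}^{-n}$ before arriving at the same intermediate expression, whereas you apply Proposition \ref{prop_dnf} to $f(x)=1/(x-a)_{\lambda}^{n}$ directly, which is slightly more economical.
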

\begin{proof}
    \begin{align*}
        \D_{\lambda}^k\frac{1}{(x-a)_{\lambda}^n}&=\D_{\lambda}^k\frac{1}{(\lambda^{-n}(\lambda^nx)-a)_{\lambda}^n}\\
        &=\D_{\lambda}^{k}(\lambda^nx-a)_{\lambda}^{-n}\\
        &=\frac{1}{\lambda^{\binom{k}{2}}x^k}(\lambda^{n+k}x-a)_{\lambda}^{-n}\\
        &=\frac{1}{\lambda^{\binom{k}{2}}x^k}\frac{1}{(\lambda^{k}x-a)_{\lambda}^n}\\
        &=\frac{1}{\lambda^{\binom{k}{2}}x^k}\frac{(x-a)_{\lambda}^k}{(x-a)_{\lambda}^k(\lambda^{k}x-a)_{\lambda}^n}\\
        &=\frac{(x-a)_{\lambda}^k}{\lambda^{\binom{k}{2}}x^k(x-a)_{\lambda}^{n+k}}.
    \end{align*}
\end{proof}

\section{General $\lambda$-binomial}\label{sec6}

\begin{definition}
For any complex number $\alpha$, define     
\begin{equation}
    (x-a)_{\lambda}^\alpha=\frac{(x-a)_{\lambda}^{\infty}}{(\lambda^{\alpha}x-a)_{\lambda}^\infty}.
\end{equation}
\end{definition}

\begin{proposition}
For any two complex numbers $\alpha$ and $\beta$, we have
    \begin{equation}
        (x-a)_{\lambda}^{\alpha}(\lambda^{\alpha}x-a)_{\lambda}^{\beta}=(x-a)_{\lambda}^{\alpha+\beta}.
    \end{equation}
\end{proposition}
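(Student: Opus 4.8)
The plan is to unfold both factors on the left using the Section \ref{sec6} definition
$$(y-a)_{\lambda}^{\gamma}=\frac{(y-a)_{\lambda}^{\infty}}{(\lambda^{\gamma}y-a)_{\lambda}^{\infty}},$$
and then watch an infinite-product cancellation take place. The point is that the definition expresses a general $\lambda$-binomial as a ratio of two ``tail'' products anchored at a shifted base point, and multiplying two such ratios telescopes, provided the shifts are compatible. This is exactly the situation here, since the base point $\lambda^{\alpha}x$ of the second factor matches the denominator shift produced by the first.

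Concretely, I would first write the two factors separately. By definition, the left factor is
$$(x-a)_{\lambda}^{\alpha}=\frac{(x-a)_{\lambda}^{\infty}}{(\lambda^{\alpha}x-a)_{\lambda}^{\infty}}.$$
For the middle factor I would apply the same definition but with $x$ replaced by $\lambda^{\alpha}x$ and the exponent taken to be $\beta$; since $\lambda^{\beta}(\lambda^{\alpha}x)=\lambda^{\alpha+\beta}x$, this gives
$$(\lambda^{\alpha}x-a)_{\lambda}^{\beta}=\frac{(\lambda^{\alpha}x-a)_{\lambda}^{\infty}}{(\lambda^{\alpha+\beta}x-a)_{\lambda}^{\infty}}.$$

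Multiplying the two displayed expressions, the factor $(\lambda^{\alpha}x-a)_{\lambda}^{\infty}$ appears once in each numerator and once in each denominator and therefore cancels, leaving
$$(x-a)_{\lambda}^{\alpha}(\lambda^{\alpha}x-a)_{\lambda}^{\beta}=\frac{(x-a)_{\lambda}^{\infty}}{(\lambda^{\alpha+\beta}x-a)_{\lambda}^{\infty}},$$
and the right-hand side is precisely $(x-a)_{\lambda}^{\alpha+\beta}$ by the definition once more. This closes the argument.

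The computation itself is a one-line cancellation, so the only genuine point to address is well-definedness: the symbol $(y-a)_{\lambda}^{\infty}$ is an infinite product $\prod_{k\geq0}\left(1-\frac{a}{\lambda^{k}y}\right)$, which converges only in the regime (here $\vert\lambda\vert>1$) where $a/(\lambda^{k}y)\to0$, so that all four tail products appearing above converge and the cancellation of $(\lambda^{\alpha}x-a)_{\lambda}^{\infty}$ is legitimate rather than formal. The main obstacle, such as it is, is therefore just verifying that the substitution $x\mapsto\lambda^{\alpha}x$ is admissible in the defining formula and that none of the denominators vanish; once convergence is granted, the identity is immediate.
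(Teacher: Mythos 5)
Your argument is exactly the paper's own proof: unfold both factors via the definition $(x-a)_{\lambda}^{\gamma}=(x-a)_{\lambda}^{\infty}/(\lambda^{\gamma}x-a)_{\lambda}^{\infty}$, note that $\lambda^{\beta}(\lambda^{\alpha}x)=\lambda^{\alpha+\beta}x$, and cancel the common factor $(\lambda^{\alpha}x-a)_{\lambda}^{\infty}$. Your additional remark on convergence of the infinite products is a reasonable precaution the paper omits, but the substance of the two proofs is identical.
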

\begin{proof}
The proposition follows directly from the definition, since
    \begin{equation*}
        \frac{(x-a)_{\lambda}^{\infty}}{(\lambda^{\alpha}x-a)_{\lambda}^\infty}\frac{(\lambda^\alpha x-a)_{\lambda}^{\infty}}{(\lambda^{\alpha+\beta}x-a)_{\lambda}^\infty}=\frac{(x-a)_{\lambda}^{\infty}}{(\lambda^{\alpha+\beta}x-a)_{\lambda}^\infty}.
    \end{equation*}
\end{proof}

\begin{proposition}
For any complex number $\alpha$, we have
\begin{equation}
    \D_{\lambda}^{k}(x-a)_{\lambda}^\alpha=\frac{(x-a)_{\lambda}^{\alpha+k}}{\lambda^{\binom{k}{2}}x^k(x-a)_{\lambda}^k}.
\end{equation}
\end{proposition}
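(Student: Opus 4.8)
The plan is to reduce this general complex-exponent statement to two facts already in hand: Proposition~\ref{prop_dnf}, which evaluates an iterated $\lambda$-derivative as a single dilation, and the addition law $(x-a)_{\lambda}^{\mu}(\lambda^{\mu}x-a)_{\lambda}^{\nu}=(x-a)_{\lambda}^{\mu+\nu}$ just established for arbitrary complex exponents.

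First I would set $f(x)=(x-a)_{\lambda}^{\alpha}$ and apply Proposition~\ref{prop_dnf} directly. Since that proposition holds for any function whose relevant values exist, it gives
\[
\D_{\lambda}^{k}(x-a)_{\lambda}^{\alpha}=\frac{f(\lambda^{k}x)}{\lambda^{\binom{k}{2}}x^{k}}=\frac{(\lambda^{k}x-a)_{\lambda}^{\alpha}}{\lambda^{\binom{k}{2}}x^{k}},
\]
where $(\lambda^{k}x-a)_{\lambda}^{\alpha}$ denotes the value of the defining ratio of infinite products with $x$ replaced by $\lambda^{k}x$.

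Next I would rewrite the dilated factor. Applying the addition law with $\mu=k$ and $\nu=\alpha$ yields $(x-a)_{\lambda}^{k}(\lambda^{k}x-a)_{\lambda}^{\alpha}=(x-a)_{\lambda}^{k+\alpha}$, hence $(\lambda^{k}x-a)_{\lambda}^{\alpha}=(x-a)_{\lambda}^{\alpha+k}/(x-a)_{\lambda}^{k}$. Substituting this into the previous display produces exactly the claimed formula, so the computation is essentially immediate.

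The only point requiring care is well-definedness: the ratio of infinite products converges only in a fixed regime for $\lambda$ (say $|\lambda|>1$, so that the factors $a/(\lambda^{k}x)$ tend to $0$), and one must check that the dilation $x\mapsto\lambda^{k}x$ stays within it so the addition law applies verbatim to the dilated argument — which it does, since $\lambda^{k}x$ is again an admissible argument. I expect this bookkeeping, rather than any genuine computation, to be the only mild obstacle. If one prefers to avoid invoking the addition proposition, the identity can be verified directly from the definition $(x-a)_{\lambda}^{\alpha}=(x-a)_{\lambda}^{\infty}/(\lambda^{\alpha}x-a)_{\lambda}^{\infty}$: both $f(\lambda^{k}x)$ and the target factor $(x-a)_{\lambda}^{\alpha+k}/(x-a)_{\lambda}^{k}$ collapse to $(\lambda^{k}x-a)_{\lambda}^{\infty}/(\lambda^{\alpha+k}x-a)_{\lambda}^{\infty}$ after cancelling the common $(x-a)_{\lambda}^{\infty}$ factors, which is the routine fallback.
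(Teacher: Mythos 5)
Your proof is correct and follows essentially the same route as the paper: both begin by applying Proposition~\ref{prop_dnf} to obtain $(\lambda^{k}x-a)_{\lambda}^{\alpha}/(\lambda^{\binom{k}{2}}x^{k})$ and then rewrite the dilated factor as $(x-a)_{\lambda}^{\alpha+k}/(x-a)_{\lambda}^{k}$. The only cosmetic difference is that you invoke the previously established addition law $(x-a)_{\lambda}^{\mu}(\lambda^{\mu}x-a)_{\lambda}^{\nu}=(x-a)_{\lambda}^{\mu+\nu}$ to perform that rewrite, whereas the paper unpacks the infinite-product definition directly and cancels the common factors --- which is precisely the fallback you describe at the end.
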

\begin{proof}
By definition, we have
    \begin{align*}
        \D_{\lambda}^{k}(x-a)_{\lambda}^\alpha&=\frac{(\lambda^kx-a)_{\lambda}^\alpha}{\lambda^{\binom{k}{2}}x^k}\\
        &=\frac{(\lambda^kx-a)_{\lambda}^{\infty}}{\lambda^{\binom{k}{2}}x^{k}(\lambda^{\alpha+k}x-a)_{\lambda}^{\infty}}\\
        &=\frac{(x-a)_{\lambda}^k(\lambda^kx-a)_{\lambda}^{\infty}}{\lambda^{\binom{k}{2}}x^k(x-a)_{\lambda}^k(\lambda^{\alpha+k}x-a)_{\lambda}^\infty}\\
        &=\frac{(x-a)_{\lambda}^\infty}{\lambda^{\binom{k}{2}}x^k(x-a)_{\lambda}^k(\lambda^{\alpha+k}x-a)_{\lambda}^\infty}=\frac{(x-a)_{\lambda}^{\alpha+k}}{\lambda^{\binom{k}{2}}x^k(x-a)_{\lambda}^k}.
    \end{align*}
\end{proof}

\begin{proposition}
For any complex number $\alpha$, we have
\begin{equation}
    \D_{\lambda}^{k}\left\{\frac{1}{(x-a)_{\lambda}^{\alpha}}\right\}=\frac{(x-a)_{\lambda}^k}{\lambda^{\binom{k}{2}}x^k(x-a)_{\lambda}^{\alpha+k}}.
\end{equation}
\end{proposition}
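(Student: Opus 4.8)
The plan is to follow the same template as the preceding proposition, applying the $n$-th $\lambda$-derivative formula of Proposition~\ref{prop_dnf} and then collapsing the shifted argument by means of the multiplicativity of the general $\lambda$-binomial. First I would set $g(x)=1/(x-a)_{\lambda}^{\alpha}$ and invoke Proposition~\ref{prop_dnf}, which gives $\D_{\lambda}^{k}g(x)=g(\lambda^{k}x)/(\lambda^{\binom{k}{2}}x^{k})$. Since $g(\lambda^{k}x)=1/(\lambda^{k}x-a)_{\lambda}^{\alpha}$, this immediately yields
\[
\D_{\lambda}^{k}\frac{1}{(x-a)_{\lambda}^{\alpha}}=\frac{1}{\lambda^{\binom{k}{2}}x^{k}\,(\lambda^{k}x-a)_{\lambda}^{\alpha}}.
\]

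The key step is then to rewrite the factor $(\lambda^{k}x-a)_{\lambda}^{\alpha}$ in terms of binomials centered at $x$. Applying the multiplicativity identity $(x-a)_{\lambda}^{s}(\lambda^{s}x-a)_{\lambda}^{t}=(x-a)_{\lambda}^{s+t}$ established above, with $s=k$ and $t=\alpha$, gives $(x-a)_{\lambda}^{k}(\lambda^{k}x-a)_{\lambda}^{\alpha}=(x-a)_{\lambda}^{\alpha+k}$, whence
\[
(\lambda^{k}x-a)_{\lambda}^{\alpha}=\frac{(x-a)_{\lambda}^{\alpha+k}}{(x-a)_{\lambda}^{k}}.
\]
Substituting the reciprocal of this expression into the displayed formula for $\D_{\lambda}^{k}$ produces $\dfrac{(x-a)_{\lambda}^{k}}{\lambda^{\binom{k}{2}}x^{k}(x-a)_{\lambda}^{\alpha+k}}$, which is exactly the claimed identity.

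I do not expect a genuine obstacle here, since the argument is a short two-move computation; the only points requiring a little care are bookkeeping ones. One must make sure that the reduction formula of Proposition~\ref{prop_dnf} is legitimately applied to the function $g$, which is defined as a ratio of infinite products rather than a polynomial --- this is the same (harmless) extension already used in the preceding proposition, and it relies on the convergence regime for $\lambda$ adopted in Section~\ref{sec6}. A fully equivalent alternative, should one prefer to avoid the multiplicativity lemma, is to expand $1/(x-a)_{\lambda}^{\alpha}=(\lambda^{\alpha}x-a)_{\lambda}^{\infty}/(x-a)_{\lambda}^{\infty}$ directly from the definition and cancel the appropriate infinite-product factors after the shift $x\mapsto\lambda^{k}x$, mirroring the proof given for $\D_{\lambda}^{k}(x-a)_{\lambda}^{\alpha}$.
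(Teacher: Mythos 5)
Your proof is correct and takes essentially the same route as the paper: both first apply Proposition \ref{prop_dnf} to obtain $1/\bigl(\lambda^{\binom{k}{2}}x^{k}(\lambda^{k}x-a)_{\lambda}^{\alpha}\bigr)$ and then rewrite $(\lambda^{k}x-a)_{\lambda}^{\alpha}$ as $(x-a)_{\lambda}^{\alpha+k}/(x-a)_{\lambda}^{k}$. The only cosmetic difference is that the paper performs this last step by expanding into the infinite products of the general definition and cancelling, whereas you cite the multiplicativity proposition of Section \ref{sec6} directly (which is itself proved by exactly that expansion), so no substantive gap arises.
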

\begin{proof}
    \begin{align*}
        \D_{\lambda}^{k}\left\{\frac{1}{(x-a)_{\lambda}^{\alpha}}\right\}&=\frac{1}{\lambda^{\binom{k}{2}}x^k(\lambda^kx-a)_{\lambda}^\alpha}\\
        &=\frac{(\lambda^{\alpha+k}x-a)_{\lambda}^{\infty}}{\lambda^{\binom{k}{2}}x^k(\lambda^kx-a)_{\lambda}^{\infty}}\\
        &=\frac{(x-a)_{\lambda}^k(\lambda^{\alpha+k}x-a)_{\lambda}^{\infty}}{\lambda^{\binom{k}{2}}x^k(x-a)_{\lambda}^k(\lambda^kx-a)_{\lambda}^{\infty}}\\
        &=\frac{(x-a)_{\lambda}^k(\lambda^{\alpha+k}x-a)_{\lambda}^{\infty}}{\lambda^{\binom{k}{2}}x^k(x-a)_{\lambda}^{\infty}}\\
        &=\frac{(x-a)_{\lambda}^k}{\lambda^{\binom{k}{2}}x^k(x-a)_{\lambda}^{\alpha+k}}.
    \end{align*}
\end{proof}

\section{A $\lambda$-Taylor formula}\label{sec7}

Some useful identities in this section are:
\begin{align}
    (\lambda^{-i}a-a)_{\lambda}^i&=(\lambda;\lambda)_{i},\\
    (a-a)_{\lambda}^i&=\frac{1}{(\lambda;\lambda)_{i}}.
\end{align}

\begin{theorem}
For any function $f(x^{-1})$, with $f(x)$ a polynomial of degree $n$, and a non-zero real number $a$, we have the following $\lambda$-Taylor expansion:
\begin{equation}
    f(x^{-1})=\sum_{k=0}^{n}c_{k}(x-a)_{\lambda}^{k},
\end{equation}
where
\begin{align*}
    c_{0}&=f(x),\\
    c_{1}&=\frac{\lambda(\I_{\lambda}f)(a)-af(a)}{a(1-\lambda)}\\
    &\vdots\\
    c_{n}&=\frac{\lambda^{\binom{n+1}{2}}(\I_{\lambda}^{n}f)(a)}{a^n(\lambda;\lambda)_{n}}-\sum_{i=1}^{n-1}\frac{c_{i}}{(\lambda;\lambda)_{n-i}}.
\end{align*}
\end{theorem}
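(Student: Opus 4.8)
The plan is to view the expansion as a change of basis in the space of polynomials in $y=x^{-1}$ and to recover the coefficients by one family of evaluations at $x=a$. Each basis element $(x-a)_{\lambda}^{k}=\prod_{j=0}^{k-1}(1-a\lambda^{-j}y)$ is a polynomial in $y$ of exact degree $k$, with leading coefficient $(-a)^{k}\lambda^{-\binom{k}{2}}\neq0$ because $a\neq0$; hence $\{(x-a)_{\lambda}^{k}\}_{k=0}^{n}$ is a basis of the polynomials of degree $\le n$ in $y$. Since $F(x):=f(x^{-1})$ is such a polynomial of degree $n$, the expansion $F(x)=\sum_{k=0}^{n}c_{k}(x-a)_{\lambda}^{k}$ exists and is unique, and only the values of the $c_{k}$ remain to be found. (Throughout, the symbol $f$ in the coefficient formulas is read as the expanded function, i.e.\ $F$.)

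First I would read off $c_{0}$ by evaluating the expansion at $x=a$: since $(a-a)_{\lambda}^{k}=0$ for every $k\ge1$ while $(a-a)_{\lambda}^{0}=1$, every higher term drops out and $c_{0}=F(a)$. For $m\ge1$ I would apply the linear operator $\I_{\lambda}^{m}$ to both sides and then set $x=a$. By Proposition \ref{prop_intk_binom},
\begin{equation*}
\I_{\lambda}^{m}(x-a)_{\lambda}^{k}=\frac{x^{m}(\lambda^{-m}x-a)_{\lambda}^{m}(x-a)_{\lambda}^{k-m}}{\lambda^{\binom{m+1}{2}}},
\end{equation*}
for every integer $k$. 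Putting $x=a$ and using $(\lambda^{-m}a-a)_{\lambda}^{m}=(\lambda;\lambda)_{m}$ together with the three cases of $(a-a)_{\lambda}^{k-m}$ — equal to $0$ for $k>m$ (the factor $1-a/a$ occurs), to $1$ for $k=m$, and to $1/(\lambda;\lambda)_{m-k}$ for $k<m$ (via $(a-a)_{\lambda}^{-p}=1/(\lambda^{-p}a-a)_{\lambda}^{p}=1/(\lambda;\lambda)_{p}$) — all terms with $k>m$ vanish, leaving
\begin{equation*}
(\I_{\lambda}^{m}F)(a)=\frac{a^{m}(\lambda;\lambda)_{m}}{\lambda^{\binom{m+1}{2}}}\sum_{k=0}^{m}\frac{c_{k}}{(\lambda;\lambda)_{m-k}}.
\end{equation*}

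These relations for $m=0,1,\dots,n$ form a lower-triangular system for $(c_{0},\dots,c_{n})$ whose diagonal entries are $1$ (the $k=m$ term is $c_{m}/(\lambda;\lambda)_{0}=c_{m}$), so forward substitution solves it and gives
\begin{equation*}
c_{m}=\frac{\lambda^{\binom{m+1}{2}}(\I_{\lambda}^{m}F)(a)}{a^{m}(\lambda;\lambda)_{m}}-\sum_{k=0}^{m-1}\frac{c_{k}}{(\lambda;\lambda)_{m-k}}.
\end{equation*}
Initialized by $c_{0}=F(a)$, this reproduces the stated coefficients; for $m=1$ the $k=0$ term merges with the first fraction into $\bigl(\lambda(\I_{\lambda}F)(a)-aF(a)\bigr)/\bigl(a(1-\lambda)\bigr)$, matching $c_{1}$. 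I expect the main obstacle to be the evaluation of the lower-order terms: unlike classical Taylor, $\I_{\lambda}^{m}$ does not annihilate $(x-a)_{\lambda}^{k}$ at $x=a$ for $k<m$, so the extraction is triangular rather than diagonal and forces the recursive subtraction of the already-determined $c_{k}$; correctly handling the negative-exponent factor $(x-a)_{\lambda}^{k-m}$ there is the delicate point of the computation.
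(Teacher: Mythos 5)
Your proof is correct and follows essentially the same route as the paper: expand $f(x^{-1})$ in the basis $(x-a)_{\lambda}^{k}$, apply $\I_{\lambda}^{m}$ and evaluate at $x=a$ using $(\lambda^{-m}a-a)_{\lambda}^{m}=(\lambda;\lambda)_{m}$ and the case analysis of $(a-a)_{\lambda}^{k-m}$, then solve the resulting triangular system by forward substitution. One remark: your recursion correctly retains the $k=0$ term $c_{0}/(\lambda;\lambda)_{m}$, which agrees with the paper's linear system and with its explicit formula for $c_{1}$, whereas the paper's displayed formula for $c_{n}$ starts the sum at $i=1$ and thus omits this term --- your version is the consistent one.
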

\begin{proof}
Let $V$ be the space of polynomials of degree not larger than $N$, so that $\dim(V)=n+1$. The polynomials $P_{n}(x)=(ax;\lambda^{-1})_{n}$ constitute a basis for $V$; i.e., any polynomial $P(x)\in V$ may be expressed as
\begin{equation}
    P(x)=\sum_{k=0}^{n}c_{k}(ax;\lambda^{-1})_{k}
\end{equation}
for some unique constants $c_{k}$. Through the mapping $x\mapsto x^{-1}$ we define the function 
\begin{equation}\label{eqn_expan-f}
    f(x)=P(x^{-1})=\sum_{k=0}^{n}c_{k}(x-a)_{\lambda}^k.
\end{equation}
Putting $x=a$ one gets $c_{0}=f(a)$. Then, apply the linear operator $\I_{\lambda}$ to both sides of the Eq.(\ref{eqn_expan-f})
\begin{equation}
    (\I_{\lambda}f)(x)=c_{0}\frac{x}{\lambda}+\left(\frac{x}{\lambda}-a\right)\sum_{k=1}^{n}c_{k}(x-a)_{\lambda}^{k-1}.
\end{equation}
Again, putting $x=a$, we get
\begin{equation}\label{eqn_sys2}
    (\I_{\lambda}f)(a)=f(a)\frac{a}{\lambda}+\left(\frac{a}{\lambda}-a\right)c_{1}
\end{equation}
and
\begin{equation}
    c_{1}=\frac{\lambda(\I_{\lambda}f)(a)-af(a)}{a(1-\lambda)}.    
\end{equation}
Once again, we apply the linear operator $\I_{\lambda}$ $i$ times, $2\leq i\leq n$, to both sides of the Eq.(\ref{eqn_expan-f}) and from Proposition \ref{prop_intk_binom},
\begin{align*}
    (\I_{\lambda}^if)(x)&=c_{0}\frac{x^i}{\lambda^{\binom{i+1}{2}}}+\sum_{k=1}^{n}c_{k}\frac{x^i(\lambda^{-i}x-a)_{\lambda}^i(x-a)_{\lambda}^{k-i}}{\lambda^{\binom{i+1}{2}}}\\
    &=f(a)\frac{x^i}{\lambda^{\binom{i+1}{2}}}+\frac{x^i(\lambda^{-i}x-a)_{\lambda}^i}{\lambda^{\binom{i+1}{2}}}\sum_{k=1}^{n}c_{k}(x-a)_{\lambda}^{k-i}.
\end{align*}
Putting $x=a$,
\begin{align}
    (\I_{\lambda}^if)(a)&=f(a)\frac{a^i}{\lambda^{\binom{i+1}{2}}}+\frac{a^i(\lambda^{-i}a-a)_{\lambda}^i}{\lambda^{\binom{i+1}{2}}}\sum_{k=1}^{i}c_{k}(a-a)_{\lambda}^{k-i}\nonumber\\
    &=f(a)\frac{a^i}{\lambda^{\binom{i+1}{2}}}+\frac{a^i(\lambda;\lambda)_{i}}{\lambda^{\binom{i+1}{2}}}\sum_{k=1}^{i}c_{k}\frac{1}{(\lambda;\lambda)_{i-k}}.\label{eqn_sys3}
\end{align}
Eqs. (\ref{eqn_sys2}) and (\ref{eqn_sys3}) define the system of linear equations
\begin{align}\label{eqn_system}
\begin{array}{ccccccccc}
    c_{0}& & & & & & &=&f(a)\\
    \frac{a}{\lambda}c_{0}& + &\frac{a(1-\lambda)}{\lambda}c_{1} & & & & & =&(\I_{\lambda}f)(a)\\
    \vdots& & \vdots &  & \ddots & & &\vdots\\
    \frac{a^n}{\lambda^{\binom{n+1}{2}}}c_{0}& + &\frac{a^n(\lambda;\lambda)_{n}}{\lambda^{\binom{n+1}{2}}(\lambda;\lambda)_{n-1}}c_{1} & + & \cdots &  + &\frac{a^n(\lambda;\lambda)_{n}}{\lambda^{\binom{n+1}{2}}}c_{n} &=&(\I_{\lambda}^nf)(a)
\end{array}.
\end{align}
By forward substitution, the solution of the system Eq.(\ref{eqn_system}) is
\begin{align*}
    c_{0}&=f(x),\\
    c_{1}&=\frac{\lambda(\I_{\lambda}f)(a)-af(a)}{a(1-\lambda)}\\
    &\vdots\\
    c_{n}&=\frac{\lambda^{\binom{n+1}{2}}(\I_{\lambda}^{n}f)(a)}{a^n(\lambda;\lambda)_{n}}-\sum_{i=1}^{n-1}\frac{c_{i}}{(\lambda;\lambda)_{n-i}}.
\end{align*}
The proof is reached.
\end{proof}

\begin{theorem}
The following connection formula holds.
    \begin{align*}
        \frac{1}{x^n}=\frac{1}{a^n}\sum_{k=0}^n(-1)^k\lambda^{\binom{k}{2}}\qbinom{n}{k}_{\lambda}(x-a)_{\lambda}^k.
    \end{align*}
\end{theorem}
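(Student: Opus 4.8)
The plan is to reduce the stated connection formula to the classical $q$-binomial theorem of Eq.~(\ref{eqn_gauss}) specialized at $q=\lambda^{-1}$, followed by a standard $q$-inversion. First I would substitute $y=a/x$ and observe that each basis function is a $q$-shifted factorial in disguise: since
\[
(x-a)_{\lambda}^{k}=\prod_{i=0}^{k-1}\Bigl(1-\frac{a}{\lambda^{i}x}\Bigr)=\prod_{i=0}^{k-1}\bigl(1-y\lambda^{-i}\bigr)=(y;\lambda^{-1})_{k},
\]
and since $1/x^{n}=y^{n}/a^{n}$, the asserted identity is equivalent, after clearing the common factor $a^{-n}$, to the purely polynomial statement
\[
y^{n}=\sum_{k=0}^{n}(-1)^{k}\lambda^{\binom{k}{2}}\qbinom{n}{k}_{\lambda}(y;\lambda^{-1})_{k}.
\]
At this point the parameter $a$ has disappeared and everything is an identity of polynomials in the single variable $y$.

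Next I would expand each $(y;\lambda^{-1})_{k}$ in the monomial basis using Eq.~(\ref{eqn_gauss}) with $q=\lambda^{-1}$, namely $(y;\lambda^{-1})_{k}=\sum_{j=0}^{k}\qbinom{k}{j}_{\lambda^{-1}}(\lambda^{-1})^{\binom{j}{2}}(-1)^{j}y^{j}$, and then convert the inverted Gaussian coefficient by the symmetry $\qbinom{k}{j}_{\lambda^{-1}}=\lambda^{-j(k-j)}\qbinom{k}{j}_{\lambda}$. Interchanging the two summations, the coefficient of $y^{j}$ in the right-hand side becomes $\sum_{k=j}^{n}(-1)^{k-j}\lambda^{\,\binom{k}{2}-\binom{j}{2}-j(k-j)}\qbinom{n}{k}_{\lambda}\qbinom{k}{j}_{\lambda}$. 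The exponent collapses after a short computation to $\binom{k-j}{2}$, so the required equality reduces to the single orthogonality relation
\[
\sum_{k=j}^{n}(-1)^{k-j}\lambda^{\binom{k-j}{2}}\qbinom{n}{k}_{\lambda}\qbinom{k}{j}_{\lambda}=\delta_{nj},
\]
which is exactly the standard $q$-binomial (Gaussian) inversion identity at base $\lambda$. Accepting this, the coefficient of $y^{j}$ vanishes for $j<n$ and equals $1$ for $j=n$, which establishes the polynomial identity; restoring $y=a/x$ and the factor $a^{-n}$ then yields the theorem.

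I expect the main obstacle to be the exponent bookkeeping together with the correct handling of the change of base $q\leftrightarrow\lambda^{-1}$: the three separate contributions — the $\lambda$-power $\binom{k}{2}$ from $N_{n,k}$, the $(\lambda^{-1})^{\binom{j}{2}}$ from the $q$-binomial theorem, and the $\lambda^{-j(k-j)}$ from the symmetry of the Gaussian coefficient — must be combined without error so as to collapse to the single clean exponent $\binom{k-j}{2}$. A fully self-contained alternative, avoiding any appeal to a named inversion lemma, is to apply $q$-binomial inversion directly to Eq.~(\ref{eqn_gauss}): writing that formula as $a_{n}=\sum_{k}\qbinom{n}{k}_{q}b_{k}$ with $a_{n}=(y;q)_{n}$ and $b_{k}=(-1)^{k}q^{\binom{k}{2}}y^{k}$, one solves $b_{n}=\sum_{k}(-1)^{n-k}q^{\binom{n-k}{2}}\qbinom{n}{k}_{q}a_{k}$ for $y^{n}$ and specializes $q=\lambda^{-1}$; the obstacle is then merely the same exponent simplification, now of $q^{\binom{n-k}{2}-\binom{n}{2}}$ combined with the coefficient symmetry, again yielding $\lambda^{\binom{k}{2}}$ and the sign $(-1)^{k}$.
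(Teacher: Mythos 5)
Your proposal is correct. Note that the paper states this theorem without any proof at all, so there is nothing to compare against; your argument fills a genuine gap. The reduction is sound: with $y=a/x$ one has $(x-a)_{\lambda}^{k}=(y;\lambda^{-1})_{k}$ and $1/x^{n}=y^{n}/a^{n}$, so the claim becomes $y^{n}=\sum_{k=0}^{n}(-1)^{k}\lambda^{\binom{k}{2}}\qbinom{n}{k}_{\lambda}(y;\lambda^{-1})_{k}$, and I have checked your exponent bookkeeping: $\binom{k}{2}-\binom{j}{2}-j(k-j)=\binom{k-j}{2}$ does hold, and the resulting orthogonality relation $\sum_{k=j}^{n}(-1)^{k-j}\lambda^{\binom{k-j}{2}}\qbinom{n}{k}_{\lambda}\qbinom{k}{j}_{\lambda}=\delta_{nj}$ follows from the subset-of-a-subset identity $\qbinom{n}{k}_{\lambda}\qbinom{k}{j}_{\lambda}=\qbinom{n}{j}_{\lambda}\qbinom{n-j}{k-j}_{\lambda}$ together with Eq.~(\ref{eqn_gauss}) evaluated at $x=1$, which gives $(1;\lambda)_{n-j}=\delta_{nj}$ --- it would strengthen the write-up to include this one-line derivation rather than citing the inversion as "standard." Your alternative route (inverting Eq.~(\ref{eqn_gauss}) directly and specializing $q=\lambda^{-1}$, using $\qbinom{n}{k}_{1/\lambda}=\lambda^{-k(n-k)}\qbinom{n}{k}_{\lambda}$ and $\binom{n}{2}-\binom{n-k}{2}-k(n-k)=\binom{k}{2}$) also checks out and is arguably cleaner. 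Both are consistent with how the paper handles the later connection formula for $(x-b)_{\lambda}^{n}$, whose proof begins from exactly the rewriting $(x-b)_{\lambda}^{n}=\sum_{k}\qbinom{n}{k}_{1/\lambda}\lambda^{-\binom{k}{2}}(b/x)^{k}$ that you use. The only blemish is the undefined symbol $N_{n,k}$ in your closing discussion, which should be replaced by the explicit coefficient $(-1)^{k}\lambda^{\binom{k}{2}}\qbinom{n}{k}_{\lambda}$.
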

From the above theorem, we have the following result.
\begin{theorem}
    \begin{equation}\label{eqn_reci_func}
        c_{0}+\frac{c_{1}}{x}+\cdots+\frac{c_{m}}{x^m}=\sum_{k=0}^{m}\left(\sum_{i=k}^{m}\frac{c_{i}}{a^i}\qbinom{i}{k}_{\lambda}\right)(-1)^k\lambda^{\binom{k}{2}}(x-a)_{\lambda}^k.
    \end{equation}
\end{theorem}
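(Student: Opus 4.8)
The plan is to obtain this identity as an immediate consequence of the preceding connection formula, by substituting it term-by-term into the left-hand side and then rearranging a finite double sum. First I would write the left-hand side compactly as $\sum_{i=0}^{m}c_{i}x^{-i}$, treating the constant $c_{0}$ as the $i=0$ summand; here I would note that the previous theorem degenerates correctly at $n=0$ to $x^{0}=1$, since $\qbinom{0}{0}_{\lambda}=1$, $\lambda^{\binom{0}{2}}=1$, and $(x-a)_{\lambda}^{0}=1$, so that the $i=0$ term poses no special difficulty.

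Next, for each fixed $i$ with $0\leq i\leq m$ I would invoke the preceding theorem to replace $x^{-i}$ by
\[
    \frac{1}{a^{i}}\sum_{k=0}^{i}(-1)^{k}\lambda^{\binom{k}{2}}\qbinom{i}{k}_{\lambda}(x-a)_{\lambda}^{k}.
\]
Inserting this into $\sum_{i=0}^{m}c_{i}x^{-i}$ produces the finite double sum
\[
    \sum_{i=0}^{m}\sum_{k=0}^{i}\frac{c_{i}}{a^{i}}(-1)^{k}\lambda^{\binom{k}{2}}\qbinom{i}{k}_{\lambda}(x-a)_{\lambda}^{k}.
\]

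The crucial step is the interchange of the order of summation. The index region $\{(i,k):0\leq k\leq i\leq m\}$ is rewritten as $\{(i,k):0\leq k\leq m,\ k\leq i\leq m\}$, so the outer sum runs over $k$ from $0$ to $m$ and the inner sum over $i$ from $k$ to $m$. Since the factor $(-1)^{k}\lambda^{\binom{k}{2}}(x-a)_{\lambda}^{k}$ depends only on $k$, it can be pulled out of the inner sum, leaving the coefficient $\sum_{i=k}^{m}\frac{c_{i}}{a^{i}}\qbinom{i}{k}_{\lambda}$ attached to each basis function $(x-a)_{\lambda}^{k}$, which is precisely the stated right-hand side of Eq.(\ref{eqn_reci_func}).

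Because every sum involved is finite, there are no convergence questions and the rearrangement is purely combinatorial; the interchange of summation is the only genuine manipulation, and the main thing to verify carefully is that the boundary cases $k=0$ and $i=0$ reproduce the claimed coefficients, which they do by the degeneracy observation above. I therefore expect no real obstacle beyond the bookkeeping of the summation indices.
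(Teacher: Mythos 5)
Your proof is correct and is exactly the derivation the paper intends: it gives no written proof, only the remark that the identity follows from the preceding connection formula, which is precisely your substitution of $x^{-i}$ followed by the interchange of the finite double sum. Nothing further is needed.
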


\begin{theorem}
The following connection formula holds.
    \begin{equation}
        (x-b)_{\lambda}^n=\sum_{k=0}^{n}\qbinom{n}{k}_{\lambda}(-1)^k\lambda^{2\binom{k}{2}}(\lambda^{1-n}b/a)^k(a-b)_{\lambda}^{n-k}(x-a)_{\lambda}^k.
    \end{equation}
\end{theorem}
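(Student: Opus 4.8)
The plan is to strip away the $\lambda$-dependence by passing to the $q$-shifted factorial with $q=\lambda^{-1}$, turning the stated formula into a single connection identity for ordinary $q$-products, which is then a terminating $q$-Chu--Vandermonde summation. First I would set $u=x^{-1}$ and read off the dictionary from the definition of $(x-a)^{n}_{\lambda}$:
\[
(x-b)^{n}_{\lambda}=(bu;q)_{n},\qquad (x-a)^{k}_{\lambda}=(au;q)_{k},\qquad (a-b)^{\,n-k}_{\lambda}=(b/a;q)_{n-k}.
\]
Writing $t=b/a$ and $v=au$, the left side becomes $(tv;q)_{n}$ and every factor on the right except the Gaussian coefficient is already a $q$-product in $v$.

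The one $\lambda$-sensitive ingredient is $\qbinom{n}{k}_{\lambda}$, for which I would use the reflection identity $\qbinom{n}{k}_{\lambda}=\lambda^{\,k(n-k)}\qbinom{n}{k}_{q}$. Substituting this together with $\lambda^{2\binom{k}{2}}(\lambda^{1-n})^{k}=\lambda^{-k(n-k)}$ into the displayed prefactor, the two powers of $\lambda$ cancel, and (once the sign has been accounted for) the whole statement reduces to an identity in which $\lambda$ no longer appears, of the form
\[
(tv;q)_{n}=\sum_{k=0}^{n}\qbinom{n}{k}_{q}\,t^{k}\,(t;q)_{n-k}\,(v;q)_{k}.
\]
The same reduced identity can be reached entirely within the paper's framework: since $(x-b)^{n}_{\lambda}$ is a polynomial in $x^{-1}$ of degree $n$, one may expand it by the $q$-binomial theorem \eqref{eqn_gauss} to obtain its coefficients $c_i$ and feed these into the reciprocal-expansion formula \eqref{eqn_reci_func}; the inner sum that results is exactly the $q$-sum above.

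To prove the reduced identity I would argue that both sides are polynomials in $v$ of degree at most $n$ and that $\{(v;q)_{k}\}_{k=0}^{n}$ is a basis, so it suffices to match values on the grid $v=q^{-j}$, $j=0,\dots,n$. At such a point every summand with $k>j$ vanishes because $(q^{-j};q)_{k}=0$, and the surviving finite sum $\sum_{k=0}^{j}\qbinom{n}{k}_{q}t^{k}(t;q)_{n-k}(q^{-j};q)_{k}$ is a terminating ${}_{2}\phi_{1}$ that evaluates to $(tq^{-j};q)_{n}$ by $q$-Chu--Vandermonde; this is precisely $(tv;q)_{n}$ at $v=q^{-j}$. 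A self-contained alternative is induction on $n$ using $(tv;q)_{n+1}=(tv;q)_{n}(1-q^{n}tv)$ together with $q^{k}v\,(v;q)_{k}=(v;q)_{k}-(v;q)_{k+1}$, collecting coefficients by the $q$-Pascal rule $\qbinom{n+1}{k}_{q}=\qbinom{n}{k}_{q}+q^{\,n+1-k}\qbinom{n}{k-1}_{q}$ and $(t;q)_{n+1-k}=(t;q)_{n-k}(1-tq^{n-k})$.

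I expect the combinatorial core to be routine; the real care lies in the reduction step. One must correctly track the three separate powers of $\lambda$ — from $\qbinom{n}{k}_{\lambda}$, from $\lambda^{2\binom{k}{2}}$, and from $(\lambda^{1-n})^{k}$ — and check that they cancel exactly, since this cancellation is what both certifies the disappearance of $\lambda$ and pins down the precise constant, \emph{including its sign}, multiplying each $(x-a)^{k}_{\lambda}$. For that reason I would carry out the exponent accounting first and verify the full formula by hand in the case $n=1$ before committing to the general summation, so that the sign convention in the prefactor is fixed unambiguously.
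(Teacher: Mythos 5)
Your reduction is correct, and your caution about the sign turns out to be the crux. The exponent bookkeeping works exactly as you say: with $q=\lambda^{-1}$ one has $\qbinom{n}{k}_{\lambda}=\lambda^{k(n-k)}\qbinom{n}{k}_{q}$ and $\lambda^{2\binom{k}{2}}(\lambda^{1-n})^{k}=\lambda^{-k(n-k)}$, so the powers of $\lambda$ cancel and the theorem collapses to a $q$-identity in $t=b/a$, $v=a/x$ --- but \emph{with} an extra $(-1)^{k}$ inherited from the printed prefactor. That signed version is false: at $n=1$ the right-hand side of the theorem evaluates to $1-2b/a+b/x$ rather than $1-b/x$. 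Your unsigned identity $(tv;q)_{n}=\sum_{k}\qbinom{n}{k}_{q}\,t^{k}(t;q)_{n-k}(v;q)_{k}$ is the true one, so the $(-1)^{k}$ in the statement is spurious and your plan proves the corrected formula. The origin of the discrepancy is visible in the paper's own argument: it expands $(x-b)_{\lambda}^{n}$ by Eq.~(\ref{eqn_gauss}) but omits the factor $(-1)^{k}$ required by that formula, and omits it again when reassembling the inner sum into $(a-b)_{\lambda}^{n-k}$; these two omissions leave the outer $(-1)^{k}$ coming from Eq.~(\ref{eqn_reci_func}) uncancelled.

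As for method, you and the paper diverge after the same starting point. The paper stays entirely inside the $\lambda$-notation: it feeds the Gauss coefficients of $(x-b)_{\lambda}^{n}$ into Eq.~(\ref{eqn_reci_func}), shifts the inner summation index, and recognizes the resulting inner sum as $(a-b)_{\lambda}^{n-k}$ via Eq.~(\ref{eqn_gauss}) again --- your ``alternative within the paper's framework'' is precisely this computation. Your primary route instead isolates a clean, $\lambda$-free $q$-identity and proves it by matching values on the grid $v=q^{-j}$ (or by induction via the $q$-Pascal rule). That buys a more transparent argument in which all the delicate exponent arithmetic is quarantined in a single cancellation, at the cost of invoking $q$-Chu--Vandermonde, which the paper's double-sum manipulation avoids. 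Both are sound once the sign is fixed; do carry out the $n=1$ check you propose, since it is what detects the error in the statement as printed.
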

\begin{proof}
From Eq.(\ref{eqn_gauss}), 
\begin{equation}
    (x-b)_{\lambda}^n=\sum_{k=0}^{n}\qbinom{n}{k}_{1/\lambda}\lambda^{-\binom{k}{2}}(b/x)^k
\end{equation}
and from Eq.(\ref{eqn_reci_func}), 
\begin{align*}
    &(x-b)_{\lambda}^n\\
    &=\sum_{k=0}^{n}\left(\sum_{i=k}^{n}\qbinom{n}{i}_{1/\lambda}\qbinom{i}{k}_{\lambda}\lambda^{-\binom{i}{2}}(b/a)^i\right)(-1)^{k}\lambda^{\binom{k}{2}}(x-a)_{\lambda}^k\\
    &=\sum_{k=0}^{n}\left(\sum_{i=k}^{n}\lambda^{i(i-n)}\qbinom{n}{i}_{\lambda}\qbinom{i}{k}_{\lambda}\lambda^{-\binom{i}{2}}(b/a)^i\right)(-1)^{k}\lambda^{\binom{k}{2}}(x-a)_{\lambda}^k\\
    &=\sum_{k=0}^{n}\frac{(\lambda;\lambda)_{n}}{(\lambda;\lambda)_{n}}\left(\sum_{i=k}^{n}\lambda^{\binom{i}{2}+i(1-n)}\frac{1}{(\lambda;\lambda)_{n-i}(\lambda;\lambda)_{i-k}}(b/a)^i\right)(-1)^{k}\lambda^{\binom{k}{2}}(x-a)_{\lambda}^k\\
    &=\sum_{k=0}^{n}\frac{(\lambda;\lambda)_{n}}{(\lambda;\lambda)_{n}}\left(\sum_{i=0}^{n-k}\lambda^{\binom{i+k}{2}+(i+k)(1-n)}\frac{1}{(\lambda;\lambda)_{n-k-i}(\lambda;\lambda)_{i}}(b/a)^{i+k}\right)(-1)^{k}\lambda^{\binom{k}{2}}(x-a)_{\lambda}^k\\
    &=\sum_{k=0}^{n}\lambda^{\binom{k}{2}}(\lambda^{1-n}b/a)^k\qbinom{n}{k}_{\lambda}\left(\sum_{i=0}^{n-k}\lambda^{\binom{i}{2}}\qbinom{n-k}{i}_{\lambda}(\lambda^{1-n+k}b/a)^{i}\right)(-1)^{k}\lambda^{\binom{k}{2}}(x-a)_{\lambda}^k\\
    &=\sum_{k=0}^{n}\qbinom{n}{k}_{\lambda}(-1)^{k}\lambda^{2\binom{k}{2}}(\lambda^{1-n}b/a)^k(a-b)_{\lambda}^{n-k}(x-a)_{\lambda}^k.
\end{align*}
The proof is reached.
\end{proof}
Connection formulas for the polynomials Eqs. (\ref{eqn_gauss}), (\ref{eqn_rs}), and (\ref{eqn_sw}) are:
\begin{equation}
    (x^{-1};\lambda)_{n}=\sum_{k=0}^{n}\qbinom{n}{k}_{\lambda}\lambda^{2\binom{k}{2}}a^{-k}(\lambda^{k}a^{-1};\lambda)_{n-k}(x-a)_{\lambda}^k,
\end{equation}
\begin{equation}
    \h_{n}(x^{-1}|\lambda)=\sum_{k=0}^{n}\qbinom{n}{k}_{\lambda}(-1)^k\lambda^{\binom{k}{2}}a^{-k}\h_{n-k}(a^{-1}|\lambda)(x-a)_{\lambda}^k,    
\end{equation}
and
\begin{equation}
    \s_{n}(x^{-1}|\lambda)=\sum_{k=0}^{n}\qbinom{n}{k}_{\lambda}(-1)^k\lambda^{\frac{k(3k-1)}{2}}a^{-k}\s_{n-k}(\lambda^2a^{-1}|\lambda)(x-a)_{\lambda}^k.    
\end{equation}


\begin{thebibliography}{99}
\bibitem{jackson1}
{Jackson F.H., }
{On q-functions and a certain difference operator. Tans. Roy. Soc. Edin., \textbf{46},253-281 (1908)}

\bibitem{jackson2}
{Jackson F.H., }
{On q-definite integrals. Q. J. Pure Appl. Math. \textbf{41}, 193-203 (1910)}

\bibitem{jackson3}
{Jackson F.H., }
{On q-difference equations. Am. J. Math. \textbf{32}, 305-314 (1910)}

\bibitem{euler}
{Euler L,}
{Introductio in Analysin Infinitorum, T. 1, Chapter XVI, p.259, Lausanne, 1748}

\bibitem{kac}
{Kac V. and Pokman C.,}
{Quantum Calculus, Universitext, Springer-Verlag, 2002 }

\bibitem{pashaev1}
O.K Pashaev, S. Nalci, 
Golden Quantum Oscillator and Binet-Fibonacci Calculus, 
J. Phys. A: Math.Theor., 
\textbf{45}, 
(2012), 
015303.

\bibitem{pashaev2}
O.K. Pashaev, 
Quantum calculus of Fibonacci divisors and infinite hierarchy of Bosonic–Fermionic Golden quantum oscillators, 
Int. J. Geom. Methods Mod. Phys., 
\textbf{18} 05, 
(2021), 
2150075.

\bibitem{ward}
{M. Ward.,}
{A calculus of sequences, Am. J. Math. \textbf{58}(1936) 255-266.}

\end{thebibliography}
\end{document}